\newcommand{\IE}{\textit{i.e., }}
\newcommand{\CF}{\textit{cf.\ }} 
\newcommand{\EG}{\textit{e.g., }}
\newcommand{\ZZ}{{\mathbb Z}}
\newcommand{\QQ}{{\mathbb Q}}
\newcommand{\RR}{{\mathbb R}}
\newcommand{\CC}{{\mathbb C}}
\newcommand{\tr}[1]{{\hspace{0.5mm}{}^t\hspace{-0.5mm}#1}}
\newcommand{\abs}[1]{ {\left\lvert #1 \right\rvert} }
\newtheorem{theorem}{Theorem}
\newtheorem{lemma}{Lemma}
\newtheorem{proposition}{Proposition}
\newtheorem{definition}{Definition}
\newtheorem{example}{Example}
\newtheorem{remark}{Remark}
\title{Packing theory derived from phyllotaxis and products of linear forms}
\author[1]{S. E. Graiff Zurita\thanks{s-graiff@math.kyushu-u.ac.jp}}
\author[2]{B. Kane\thanks{bkane@hku.hk}}
\author[3]{R. Oishi-Tomiyasu\thanks{tomiyasu@imi.kyushu-u.ac.jp (corresponding author)}}
\affil[1]{%
    Graduate School of Mathematics, Kyushu University}
\affil[2]{%
    Department of Mathematics, University of Hong Kong}
\affil[3]{%
      Institute of Mathematics for Industry (IMI), Kyushu University}
\date{}
\begin{document}

\maketitle

\begin{abstract}
\textit{Parastichies} are spiral patterns observed in plants and numerical patterns generated using golden angle method. 
We generalize this method by using Markoff theory 
and the theory of product of linear forms, to obtain a theory for packing of 
Riemannian manifolds of general dimensions $n$ with a locally diagonalizable metric, including the Euclidean spaces.
For example, packings in a plane with logarithmic spirals and in a 3D ball (3D analogue of the Vogel spiral)
are newly obtained.
Using this method, we prove that it is possible to generate almost uniformly distributed point sets on any real analytic Riemannian surfaces in a local sense.
We also discuss how to extend the packing to the whole manifold in some special cases including the Vogel spiral.
The packing density is bounded below by approximately 0.7 for surfaces and 0.38 for 3-manifolds under the most general assumption.

\end{abstract}

\noindent \textbf{Keywords}: 
    aperiodic packing;
    Markoff spectra; 
    Lagrange spectra; 
    products of linear forms; 
    diagonalizable metric;
    inviscid Burgers equation; 
    Vogel spiral; 
    Doyle spiral

%
%
\section{Introduction}
\label{intro}

In botany, spiral patterns observed in plants, such as sunflower heads, cacti, and pinecones, are called \textit{parastichies}.
Parastichies can also be observed in patterns that are numerically generated on a surface with circular symmetry using golden angle method.
In the golden angle method, a new point is generated for each $2 \pi \varphi \approx 137.5^\circ$-rotation, 
where $\varphi = 1/(1+ \gamma_1)$ is the golden angle defined using the golden ratio $\gamma_1 = (1+\sqrt{5})/2$.
With this method, packing patterns 
have been generated on a cylindrical surface \cite{Bravais1837}, a disk known as the Vogel spiral \cite{Mathai74}, \cite{Vogel79} (Figure~\ref{fig: Vogel spiral}), surfaces of revolution \cite{Ridley86}, sphere surface for mesh generation on globe \cite{Swinbank2006}, \cite{Hardin2016}, and Poincar{\' e} disc \cite{Sadoc2013}. 


\begin{figure}[htbp]
\scalebox{0.4}{\includegraphics{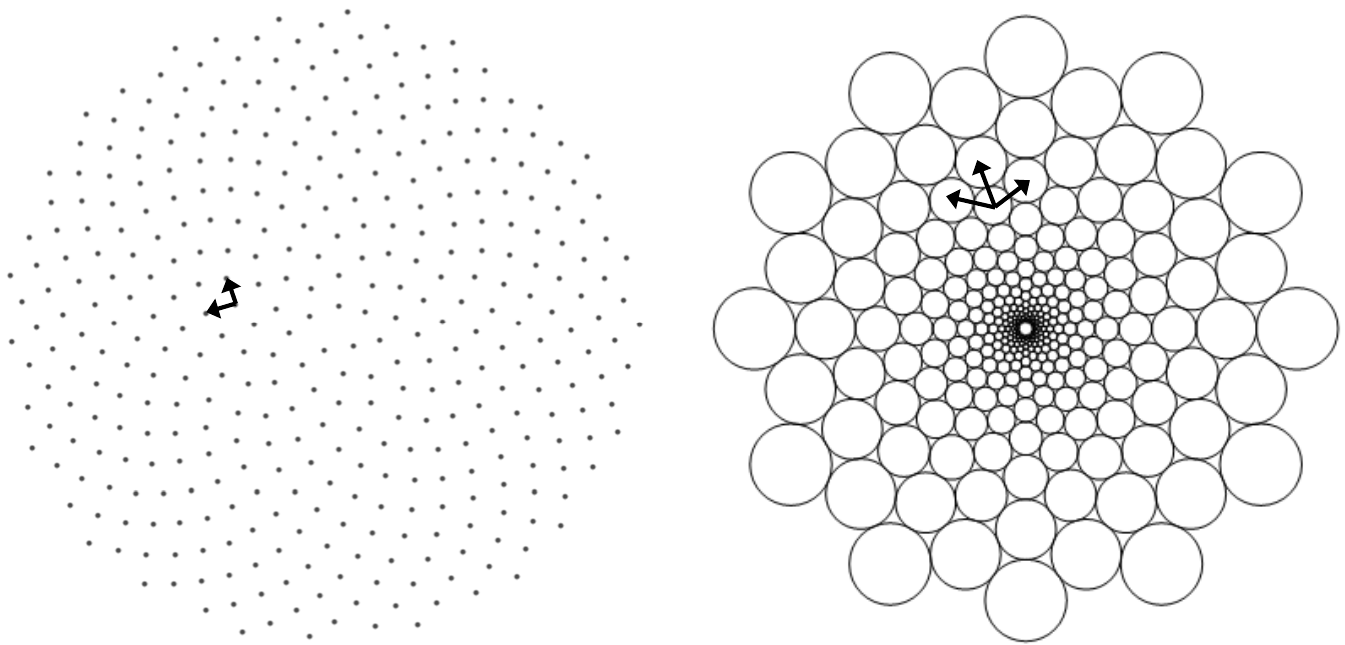}}
\caption{{\bf Left}: Vogel spiral $\sqrt{n} e^{2 \pi i n /(1+\gamma_1)}$ ($n > 0$: integer) and images of the lattice shortest vectors (arrows) that indicate the directions of parastichies, {\bf Right}: Doyle spiral of type $(12, 24)$ dealt with in Example \ref{exa: Doyle spiral}.
}
\label{fig: Vogel spiral} 
\end{figure}

The generated patterns can be regarded as the image of the lattice with the basis:
\begin{eqnarray}\label{eq: basis for the Vogel spiral}
\begin{pmatrix}
1 \\
0
\end{pmatrix}, 
\begin{pmatrix}
\varphi \\
\epsilon_0
\end{pmatrix}, \quad \varphi = \frac{1}{1+ \gamma_1},\ \epsilon_0: \text{constant.}
\end{eqnarray}
Variations of the Vogel spiral can be created by substituting different $\varphi$~\cite{Naylor2002}.
While the golden angle method is applicable to surfaces with circular symmetry, 
it has not been known how to generalize it on general surfaces or manifolds in higher dimensions, maintaining the packing density in a certain range. The golden angle method for higher dimensions has been posed as an open problem \cite{Lodkin2019}, \cite{Akiyama2020}.
The main goal of this paper is to establish a general theory and a concrete method for that.

As a \textit{packing}, we shall consider an arrangement of non-overlapping $n$-dimensional balls of the identical radius. 
Although the case of varying radii 
is not the scope of this article, 
the Doyle spiral  (Figure~\ref{fig: Vogel spiral}) has also been addressed as a pattern of parastichies~\cite{Arnold11} and known as the case of conformal mapping. 
In fact, the Doyle spiral and the Vogel spiral can be constructed within the same framework (Example~\ref{exa: Doyle spiral}).


For the above goal, it is necessary to determine the optimal lattice of general rank that will be used in our packing method.
As shown in (2) of Theorem~\ref{thm: theorem 1}, even 
the lattice of rank 2 given by (\ref{eq: basis for the Vogel spiral}) is not optimal for bounding the packing density by a larger lower limit. 

For any full-rank lattice $L \subset \RR^n$, let $\Delta(L)$ be the packing density (\ref{eq: definition of den}) of the lattice packing given by $L$. 
For any $B_n \in GL_n(\RR)$, let $L(B_n)$ be the lattice generated by the column vectors of $B_n$. 
The following problem will be discussed in Section~\ref{sec: Most density-stable lattices in R^2 and R^3} for 
determining the optimal lattices, using known theorems for \textit{products of linear forms.} 
\begin{description}
\item[{\bf Problem}:]
Determine the lattice basis $B \in GL_n(\RR)$ with $\Delta_n^\prime = \Delta^\prime(L(B))$.
\begin{eqnarray}
\Delta_n^\prime &:=& \sup_{ B \in GL_n(\RR), } \Delta^\prime(L(B)), \\
\Delta^\prime(L(B)) &:=& \inf_{ D \in GL_n(\RR)\text{: diagonal} } \Delta( L(D B) ). \nonumber
\end{eqnarray}
\end{description}


%
The answer for the above problem 
enables us to expand the scope of the golden angle method to higher dimensions.
The packing densities of the obtained aperiodic packings 
are bounded below approximately by 
$\Delta_n^\prime$ for each dimension $n$: 
\begin{eqnarray*}
\Delta_2^\prime = \frac{\pi}{ 2 \sqrt{5} } \approx 0.702, & &
\Delta_3^\prime = \frac{ \sqrt{3} \pi }{ 14 } \approx 0.389, \\ 
\Delta_4^\prime \ge \frac{ \pi^2 }{ 10\sqrt{29} } \approx 0.183, & & 
\Delta_5^\prime \ge \frac{ 5 \sqrt{5} \pi^2 }{ 12 \cdot 11^{2} } \approx 0.076. 
\end{eqnarray*}



In addition to such a lattice, 
a map $f({\mathbf x}) = (f_1, \ldots, f_N)$ from an open subset ${\mathfrak D} \subset \RR^n$ to $\RR^N$ ($n \le N$) with the 
Jacobian matrix $D f({\mathbf x}) = (\partial f_i / \partial x_j)_{1 \le i, j \le n}$ satisfying ($\star$) and ($\star\star$) below, is used in our packing method. 
\begin{itemize}
\item[($\star$)] $\tr{(Df)} D f$ is an invertible diagonal matrix for any ${\mathbf x} \in {\mathfrak D}$ (diagonalizable). 

\item[($\star\star$)] $\det \tr{(Df)} Df = c^2$ for some constant $c \ne 0$ (volume-preserving).
\end{itemize}
For a fixed pair of a map $f: {\mathfrak D} \rightarrow \RR^N$ and a lattice $L \subset \RR^n$, 
a packing of $f({\mathfrak D})$ is provided as $f(L \cap {\mathfrak D})$.

As a result of ($\star\star$), the mesh generated as the Voronoi diagram of the point packing is \textit{equiareal}, \IE its Voronoi cells have approximately the same volume.
($\star$) means that the metric on $f({\mathfrak D}) \subset \RR^N$ induced from the Euclidean metric of $\RR^N$ is diagonal.
It is known that diagonalization of the metric is possible for any $C^\infty$ Riemannian manifolds of dimensions $n \le 3$ in a local sense. 
The case of $n = 2$ follows from the existence of conformal metric $\lambda(x, y) (dx^2 + d y^2)$, and the case of $n = 3$ was proved in \cite{DeTurck84}. 

The main purpose of Section~\ref{sec: Application to aperiodic packings}
is to present new aperiodic packings obtained by our generalization, discussing the system of partial differential equations (PDEs) that provide $f$ satisfying ($\star$), ($\star\star$) for $n = N$.
The case of only ($\star$) and $n=N$ is known as the problem on the $n$-orthogonal curvilinear coordinate systems and has been studied extensively\cite{Darboux10}, \cite{Zakharov98}, but the case with additional constraint ($\star\star$) has been  much less studied with a few exceptions such as \cite{Kuusela86}. 

The results for the general case $n \le N$ are summarized in Section~\ref{sec: Application to aperiodic packings of the Riemannian manifolds}.
In Theorem~\ref{thm: theorem 3}, we prove that any real analytic Riemannian surface has an atlas $\{ (U_\alpha, \varphi_\alpha) \}_{\alpha \in I}$ such that every $U_\alpha$ has the metric our packing method can be applied to. 
The same is clearly true for $n = 1$ in case of piecewise differentiable curves, because ($\star$) and ($\star\star$) means that $f$ is a parametrization of $f({\mathfrak D})$ by the arc-length.

Subsequently,
a family of the PDE solutions for general dimensions is provided (Theorem~\ref{thm: theorem 4}) to explain 
the self-similarity observed among the aperiodic packings provided in Section~\ref{sec: Application to aperiodic packings}.
Another family obtained from solutions of the inviscid Burgers equation $u_t + u u_x = 0$,
also has the same self-similarity (Example~\ref{exa: Case of inviscid Burgers equation}).


As a motivation for Theorem~\ref{thm: theorem 4}, we remark that
the Vogel spiral and the phyllotaxis model can be regarded as a growing disk and a growing cylindrical surface, respectively.
This aspect of the golden angle method seems to have been not clear, while it is only applicable to surfaces with circular symmetry.
Even after the generalization, the obtained packings still have the following commonly features and an appearance reminiscent of biological growth as seen in Figures~\ref{fig: i-c solutions},~\ref{fig: 3D Vogel spiral surface}.

Firstly, the packings presented in this article 
are the images of lattice points contained in a rectangle ${\mathfrak D}$ (or a rectangular parallelepiped for the 3D case), 
which suggests that there is an inductive way to construct the PDE solutions.
Secondly, $f({\mathfrak D})$ has a codimension-1 foliation $\{ L_t \}$ such that any leaves $L_t \subset f({\mathfrak D})$ are similar to each other in $\RR^N$.
More specifically, if the last entry $x_n$ is separated so that ${\mathbf x} = ({\mathbf x}_{n-1}, x_n)$ and ${\mathfrak D} = {\mathfrak D}_2 \times I$ (${\mathfrak D}_2 \subset \RR^{n-1}$, $I \subset \RR$), 
every $f$ can be represented by 
\begin{eqnarray}\label{eq: f with self similarity}
f({\mathbf x}) = e^{\alpha(h({\mathbf x}))} U(h({\mathbf x})) f_2({\mathbf x}_{n-1}) + {\mathbf v}_0
\end{eqnarray}
for some functions $h: {\mathfrak D} \rightarrow \RR$, $\alpha: h({\mathfrak D}) \rightarrow \RR$, $U: h({\mathfrak D}) \rightarrow O(N)$, $f_2: {\mathfrak D}_2 \rightarrow \RR^N$ and ${\mathbf v}_0 \in \RR^N$.
Thus, if $t = h({\mathbf x})$ is considered as the time variable, 
the image of $\{ {\mathbf x} \in {\mathfrak D} : t = h({\mathbf x}) \}$ by $f$ has the identical shape as $f_2({\mathfrak D}_2)$ for any $t \in I$.

Theorem~\ref{thm: theorem 4} provides a method to construct $(n+1)$-dimensional Riemannian manifolds with a diagonal and constant-determinant metric from manifolds of dimension $n$ with such a metric.
In particular, for dimensions $n = 2, 3$, Theorem~\ref{thm: theorem 4} can provide a large family of 
lattice maps in the form of Eq.(\ref{eq: f with self similarity}) that fulfill ($\star$) and ($\star\star$).


As a result, the golden angle method has been generalized by using volume-preserving maps, based on algebraic properties of certain special lattices.
Aperiodic packing has a number of applications.
When quasicrystals were discovered \cite{Shechtman1984},
the Penrose tiling was immediately suggested as a mathematical model of aperiodic structures 
with a long-range order \cite{Mackay82}.
More recently, iterative algorithms for generating circle packings on Riemann surfaces have been studied \cite{Collins2003}, inspired by the Koebe-Andreev-Thurston theorem \cite{Koebe1936}, \cite{Andreev1970}, \cite{Thurston1978} and the Thurston conjecture proved in \cite{Rodin87}.
The generalized golden angle method will also broaden the range of applications. 

\section*{Methods to color point sets}

All the presented packings are 2D or 3D scatter plots displayed by Wolfram Mathematica.
The points are colored by either of two methods. The first one colors each point according to the local packing density in its neighborhood.
In the considered situation in which the lattice basis $B$ and the map $f$ on ${\mathfrak D} \subset \RR^n$ are specified,
the local density around $f({\mathbf x})$ can be approximated 
from the packing density of the lattice $L((D f ({\mathbf x})) B)$ as justified in Theorem~\ref{thm: theorem 4}.

The second one colors each point, according to its \textit{birth time},
by considering some $x_i$ among ${\mathbf x} = (x_1, \ldots, x_n)$ as the time when the point $f({\mathbf x})$ is generated
(\EG Figure~\ref{fig: i-c solutions}). 
This method allows easy observation of the self-similarity hidden in the image.

\section*{Notation and symbols}



The continued fractions are represented using squared brackets $[ \ ]$ as follows.
\begin{eqnarray*}
[a_0, a_1, a_2, \cdots, a_n, \cdots ] := a_0 + \frac{1}{a_1+} \frac{1}{a_2+} \frac{1}{a_3+} \cdots \frac{1}{a_{n}+} \cdots.
\end{eqnarray*}
The \textit{$n$'th convergent} $p_n / q_n$ of $\varphi = [a_0, a_1, a_2, \cdots]$ is the ratio of 
the coprime integers $p_n$, $q_n$ that satisfy
$p_n / q_n = [a_0, a_1, a_2, \cdots, a_n]$.
If there is an integer $m > 0$ such that $a_{m + n} = a_n$ for any $n \ge 0$, 
$[a_0, a_1, \ldots ]$ is called \textit{purely periodic}. 
In such a case,
$[a_0, a_1, \ldots ]$ is abbreviated as $[\overline{a_0, a_1, \ldots, a_{m-1}}]$.

For any lattice $L \subset \RR^n$ of full rank, 
we denote the \textit{packing density} by 
\begin{eqnarray}
\Delta(L) &:=& 
\frac{ {\rm vol}(B(1)) (\min\, L)^{n/2} }{ 2^n {\rm vol}(\RR^n / L) }
= \frac{ \pi^{n/2} }{ \Gamma(n/2+1) } \frac{ (\min\, L)^{n/2} }{ 2^n {\rm vol}(\RR^n / L) }, \label{eq: definition of den}
\end{eqnarray}
where $\min\, L$ is defined by 
\begin{eqnarray}\label{eq: definition of min}
\min\, L := \{ \abs{ l }^2 : 0 \ne l \in L \},
\end{eqnarray}
and ${\rm vol}(B(1))$ and ${\rm vol}(\RR^n / L)$ are 
the volumes of the $n$-ball with radius 1 and $\RR^n / L$.

$B \in GL_n(\RR)$ is called a \textit{basis matrix} of $L$
if the columns of $B$ are a basis of $L$.
Such a lattice $L$ is also denoted as $L(B)$ explicitly.
For any diagonal $D \in GL_n(\RR)$, the lattice $L(D B)$ is also denoted by $D \cdot L$.

We define the Lagrange number and Markoff spectrum for Theorem~\ref{thm: theorem 1}. 
\begin{definition}\label{dfn: Lagrange number}
For any real number $\alpha$, the supremum $\sup M$ of $M$ that fulfills (*) is called \textit{the Lagrange number of 
$\alpha$}, and denoted by ${\mathcal L}(\alpha)$.
\begin{description}
\item[(*)] Infinitely many rationals $p/q$ satisfy $\abs{ \alpha - p/q } < \displaystyle\frac{1}{M q^2}$.
\end{description}
The set $\{ {\mathcal L}(\alpha) : \alpha \in \RR \setminus \QQ \}$ is called \textit{the Lagrange spectrum}.
\end{definition}

For any irrational $\alpha$, its Lagrange number can be calculated from the continued fraction expansion
$\alpha = [a_0, a_1, a_2, \ldots ]$ by the following formula (\CF Proposition~1.22, \cite{Aigner2013}):
$$
{\mathcal L}(\alpha) = \limsup_{n \rightarrow \infty} \left( [a_{n+1}, a_{n+2}, \ldots ] + [0, a_{n}, a_{n-1}, \ldots, a_1 ] \right).
$$

For any indefinite binary quadratic form $f(x, y) = a x^2 + b x y + c y^2$ with real coefficients, its \textit{discriminant} $d(f)$ and $m(f)$ are defined by:
\begin{eqnarray*}
d(f) &=& b^2 - 4ac, \\
m(f) &=& \inf \left\{ \abs{f(x, y)} : 0 \ne (x, y) \in \ZZ^2 \right\}.
\end{eqnarray*}

\begin{definition}\label{dfn: Markoff spectrum}
The set of ${\mathcal M}(f) := \sqrt{d(f)} / m(f)$ of all the indefinite binary quadratic forms over $\RR$ is \textit{the Markoff spectrum}.
\end{definition}

The Markoff theorem states that the Lagrange spectrum and the Markoff spectrum coincide below 3 \cite{Markoff1880}.
In fact, any indefinite binary quadratic forms $f$ over $\RR$ with ${\mathcal M}(f) < 3$,
has a root $\alpha$ of $f(x, 1) = 0$ that fulfills ${\mathcal M}(f) = {\mathcal L}(\alpha)$.
Such an $\alpha$ with ${\mathcal L}(\alpha) < 3$
is a quadratic irrational with the continued fraction expansion 
$\alpha = [a_0, \ldots, a_n, \gamma]$, where $\gamma$ is equal to one of the following $\gamma_m$:
\begin{eqnarray}\label{eq: Markoff quadratic irrationals}
\gamma_m = \frac{m + 2u + \sqrt{9m^2 - 4}}{2m},
\end{eqnarray}
where $m$ is one of the Markoff numbers $m = 1, 2, 5, 13, \ldots$ \IE positive integers that fulfill the Markoff equation for some 
integers $m_1, m_2$:
$$
m^2 + m_1^2 + m_2^2 = 3 m m_1 m_2.
$$
The integer $0 \le u \le m/2$ is the solution of $u^2 \equiv -1$ mod $m$.

%
%
\section{Parastichies from a viewpoint of lattice-basis reduction and Markoff theory}
\label{sec: Most density-stable lattices in R^2 and R^3}


Mathematically, parastichies are the images of lines that connect lattice points with the shortest vectors.
In order to see this more precisely, 
it is useful to review Markov theory from a viewpoint of the reduction theory of lattices.
Proposition~\ref{prop: Selling reduced basis} describes a result on indefinite binary quadratic forms
in terms of the reduction of positive-definite quadratic forms.

Let $\varphi_1 > 0 > \varphi_2$ be real numbers with the continued fraction expansions: 
\begin{eqnarray*}
\varphi_1 &=& [a_0, a_1, a_2, \cdots, a_n, \cdots ], \\
-\varphi_2^{-1} &=& [a_{-1}, a_{-2}, \cdots, a_{-n}, \cdots ]. 
\end{eqnarray*}
The \textit{doubly infinite sequence} $\{ a_{n} \}_{n=-\infty}^\infty$ associated with $(\varphi_1, \varphi_2)$ is obtained from the expansions.
For simplicity, $\varphi_1, \varphi_2$ are assumed to be irrational. Hence $a_n > 0$ for any $n \ne 0, -1$.

Let $p_n^{(+)} / q_n^{(+)} = [a_0, a_1, a_2, \cdots, a_n]$, $p_n^{(-)} / q_n^{(-)} = [a_{-1}, a_{-2}, a_{-3}, \cdots, a_{-n-1}]$ be 
the $n$'th convergent of $\varphi_1$ and $-\varphi_2^{-1}$. 
For $n = -1, -2$, we put:
$$
\begin{pmatrix}
p_{-1}^{(+)} & p_{-2}^{(+)} \\
q_{-1}^{(+)} & q_{-2}^{(+)} \\
\end{pmatrix}
=
\begin{pmatrix}
p_{-1}^{(-)} & p_{-2}^{(-)} \\
q_{-1}^{(-)} & q_{-2}^{(-)} \\
\end{pmatrix}
= \begin{pmatrix}
1 & 0 \\
0 & 1
\end{pmatrix}.
$$

If $\varphi_1 = [\overline{a_0, a_1, \ldots, a_{m-1}}]$, the following equality holds for the conjugate $\overline{\varphi_1}$ of $\varphi_1$ (Lemma~1.28, \cite{Aigner2013}):
$$
-1/\overline{\varphi_1} = [\overline{a_{m-1}, a_{m-2}, \ldots a_0}].
$$
Therefore, if $\varphi_2 = \overline{\varphi_1}$, $\{ a_{n} \}_{n=-\infty}^\infty$ is also a periodic sequence.

For any $\epsilon > 0$, let $L_{\varphi_1, \varphi_2, \epsilon}$ be the lattice with the following basis:
$$
{\mathbf b}_{0} :=
\begin{pmatrix}
\epsilon^{-1/2} \\
\epsilon^{1/2}
\end{pmatrix}, \quad 
{\mathbf b}_{-1} :=
\begin{pmatrix}
-\epsilon^{-1/2} \varphi_1 \\
-\epsilon^{1/2} \varphi_2 
\end{pmatrix}.
$$

A new basis ${\mathbf b}_{n}, {\mathbf b}_{n-1}$ of $L_{\varphi_1, \varphi_2, \epsilon}$ can be provided by:
\begin{eqnarray*}
{\mathbf b}_{n} &=& 
\begin{cases}
p_{n-1}^{(+)} {\mathbf b}_{0} + q_{n-1}^{(+)} {\mathbf b}_{-1} & \text{if } n > 0, \\
(-1)^n (q_{-n-2}^{(-)} {\mathbf b}_{0} - p_{-n-2}^{(-)} {\mathbf b}_{-1}) & \text{if } n < -1. 
\end{cases}
\end{eqnarray*}

In fact, from the definition, 
\begin{eqnarray*}
\begin{pmatrix}
{\mathbf b}_{n} & {\mathbf b}_{n-1}
\end{pmatrix}
&=&
\begin{cases}
\begin{pmatrix}
{\mathbf b}_{0} & {\mathbf b}_{-1}
\end{pmatrix}
\begin{pmatrix}
p_{n-1}^{(+)} & p_{n-2}^{(+)} \\
q_{n-1}^{(+)} & q_{n-2}^{(+)} \\
\end{pmatrix} & \text{if } n \ge 0, \\
(-1)^n
\begin{pmatrix}
{\mathbf b}_{0} & {\mathbf b}_{-1} 
\end{pmatrix}
\begin{pmatrix}
q_{-n-2}^{(-)} & -q_{-n-1}^{(-)} \\
-p_{-n-2}^{(-)} & p_{-n-1}^{(-)} \\
\end{pmatrix} & \text{if } n \le 0, 
\end{cases} \\
&=&
\begin{cases}
\begin{pmatrix}
\epsilon^{-1/2} & 0 \\
0 & \epsilon^{1/2}
\end{pmatrix}
\begin{pmatrix}
1 & -\varphi_1 \\
1 & -\varphi_2
\end{pmatrix}
\begin{pmatrix}
a_{0} & 1 \\
1 & 0 \\
\end{pmatrix}
\cdots
\begin{pmatrix}
a_{n-1} & 1 \\
1 & 0 \\
\end{pmatrix} & \text{if } n \ge 0, \\ 
\begin{pmatrix}
\epsilon^{-1/2} & 0 \\
0 & \epsilon^{1/2}
\end{pmatrix}
\begin{pmatrix} 
1 & -\varphi_1 \\
1 & -\varphi_2 \\
\end{pmatrix}
\begin{pmatrix}
0 & 1 \\
1 & -a_{-1} \\
\end{pmatrix}
\cdots
\begin{pmatrix}
0 & 1 \\
1 & -a_{n} \\
\end{pmatrix} & \text{if } n \le 0.
\end{cases}
\end{eqnarray*}

Hence, 
\begin{eqnarray*}
\begin{pmatrix}
{\mathbf b}_{n} & {\mathbf b}_{n-1}
\end{pmatrix}
&=&
\begin{cases}
\begin{pmatrix}
{\mathbf b}_{n-1} & {\mathbf b}_{n-2}
\end{pmatrix}
\begin{pmatrix}
a_{n-1} & 1 \\
1 & 0 \\
\end{pmatrix} & \text{if } n \ge 1, \\ 
\begin{pmatrix}
{\mathbf b}_{n+1} & {\mathbf b}_{n}
\end{pmatrix}
\begin{pmatrix}
0 & 1 \\
1 & -a_{n} \\
\end{pmatrix} & \text{if } n \le -1.
\end{cases}
\end{eqnarray*}

Thus, 
${\mathbf b}_{n} = a_{n-1} {\mathbf b}_{n-1} + {\mathbf b}_{n-2}$ holds for any $n \in \ZZ$.
From $a_0, a_{-1} \ge 0$, $a_n > 0$ ($n \ne 0, -1$)
and the following, $\{ {\mathbf b}_n \cdot {\mathbf b}_{n-1} \}_{n=-\infty}^\infty$ increases monotonically:
$$
{\mathbf b}_n \cdot {\mathbf b}_{n-1} - {\mathbf b}_{n-1} \cdot {\mathbf b}_{n-2}
= 
a_{n-1} {\mathbf b}_{n-1} \cdot {\mathbf b}_{n-1}.
$$

In addition, 
$${\mathbf b}_n \cdot {\mathbf b}_{n-1}
=
\begin{cases}
\epsilon^{-1} q_{n-1}^{(+)} q_{n-2}^{(+)} \left( p_{n-1}^{(+)} / q_{n-1}^{(+)} - \varphi_1 \right)\left( p_{n-2}^{(+)} / q_{n-2}^{(+)} - \varphi_1 \right) \\
\hspace{5mm} + \epsilon q_{n-1}^{(+)} q_{n-2}^{(+)} \left( p_{n-1}^{(+)} / q_{n-1}^{(+)} - \varphi_2 \right)\left( p_{n-2}^{(+)} / q_{n-2}^{(+)} - \varphi_2 \right) & n \ge 0, \\
-\epsilon \varphi_2^2 q_{-n-1}^{(-)} q_{-n-2}^{(-)} \left( p_{-n-1}^{(-)} / q_{-n-1}^{(-)} + \varphi_2^{-1} \right)\left( p_{-n-2}^{(-)} / q_{-n-2}^{(-)} + \varphi_2^{-1} \right) \\
\hspace{5mm} - \epsilon^{-1} \varphi_1^2 q_{-n-1}^{(-)} q_{-n-2}^{(-)} \left( p_{-n-1}^{(-)} / q_{-n-1}^{(-)} + \varphi_1^{-1} \right)\left( p_{-n-2}^{(-)} / q_{-n-2}^{(-)} + \varphi_1^{-1} \right) & n \le 0.
\end{cases}
$$
In both cases, the first terms in the right-hand side converge to $0$ as $n \rightarrow \pm \infty$. 
The second terms diverge owing to $q_{n}^{(+)} \rightarrow \infty$ and $q_{-n}^{(-)} \rightarrow \infty$ ($n \rightarrow \infty$).
Hence, $\displaystyle\lim_{n \rightarrow \infty} {\mathbf b}_n \cdot {\mathbf b}_{n-1} = \infty$.
Similarly, $\displaystyle\lim_{n \rightarrow -\infty} {\mathbf b}_n \cdot {\mathbf b}_{n-1} = -\infty$
is obtained.

The term \textit{superbase} was first used in \cite{Conway97} to explain the Selling reduction \cite{Selling1874}.
\begin{definition}
For any lattice $L$ of rank 2, 
a basis $u_1, u_2 \in L$ and $u_3 = -u_1 - u_2$ are \textit{a Selling-reduced superbase} if $u_1, u_2, u_3$ satisfy
$u_1 \cdot u_2 \le 0$, $u_1 \cdot u_3 \le 0$, and $u_2 \cdot u_3 \le 0$.
\end{definition}

\begin{proposition}\label{prop: Selling reduced basis}
For any $\varphi_1 > 0 > \varphi_2$ and $\epsilon > 0$, suppose that a lattice $L_{\varphi_1, \varphi_2, \epsilon}$ and its basis vectors ${\mathbf b}_n$ ($n \in \ZZ$) are defined as above.
Let $N$ be the integer that fulfills
$$
n \ge N \Leftrightarrow {\mathbf b}_n \cdot {\mathbf b}_{n-1} \ge 0.
$$
Let $1 \le d \le a_{N-1}$ be the smallest integer that satisfies 
$$
	(d {\mathbf b}_{N-1} + {\mathbf b}_{N-2}) \cdot {\mathbf b}_{N-1} \ge 0.
$$ 
In this case, the following $u_1, u_2, u_3$ are a Selling reduced superbase of $L_{\varphi_1, \varphi_2, \epsilon}$.
$$u_1 := {\mathbf b}_{N-1}, \quad u_2 := (d-1) {\mathbf b}_{N-1} + {\mathbf b}_{N-2}, \quad u_3 := -u_1-u_2 = -(d {\mathbf b}_{N-1} + {\mathbf b}_{N-2}).$$
In addition, one of the following holds:
\begin{enumerate}[(i)]
\item $\abs{u_1} < \abs{u_2}, \abs{u_3}$.
\item $\abs{u_2} \le \abs{u_1} < \abs{u_3}$. In this case, $d = 1$, $u_2 = {\mathbf b}_{N-2}$.
\item $\abs{u_3} \le \abs{u_1} < \abs{u_2}$. In this case, $d = a_{N-1}$, $u_3 = -{\mathbf b}_{N}$.
\item $\abs{u_2}, \abs{u_3} \le \abs{u_1}$. In this case, $d = a_{N-1} = 1$ and $u_2 = {\mathbf b}_{N-2}$, $u_3 = -{\mathbf b}_{N}$.
\end{enumerate}
In particular, one of ${\mathbf b}_{N-2}, {\mathbf b}_{N-1}$, or ${\mathbf b}_{N}$ is the shortest vector of $L_{\varphi_1, \varphi_2, \epsilon}$.

\end{proposition}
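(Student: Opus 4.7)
The plan is to verify the three Selling inequalities $u_i \cdot u_j \leq 0$ and then settle the length-case identifications. Abbreviate $A = |\mathbf{b}_{N-1}|^2$, $B = \mathbf{b}_{N-1}\cdot\mathbf{b}_{N-2}$, $C = |\mathbf{b}_{N-2}|^2$, and $t_0 := -B/A$; by the definition of $N$ one has $B < 0$ and $\mathbf{b}_N \cdot \mathbf{b}_{N-1} = a_{N-1}A + B \geq 0$, so $t_0 \in (0, a_{N-1}]$ and $d = \lceil t_0 \rceil \in \{1,\dots,a_{N-1}\}$. Writing $\alpha := t_0 - (d-1) \in (0,1]$ and $\beta := d - t_0 \in [0,1)$, one has $\alpha + \beta = 1$. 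Iterating the recursion $\mathbf{b}_n = a_{n-1}\mathbf{b}_{n-1} + \mathbf{b}_{n-2}$ from the preamble shows that $(\mathbf{b}_{N-1},\mathbf{b}_{N-2})$ is a basis of $L_{\varphi_1,\varphi_2,\epsilon}$, and the further unimodular change $u_2 = (d-1)\mathbf{b}_{N-1} + \mathbf{b}_{N-2}$ keeps $(u_1,u_2)$ a basis, so $(u_1,u_2,u_3)$ is a superbasis. A direct expansion gives $u_1 \cdot u_2 = -\alpha A \leq 0$ and $u_1 \cdot u_3 = -\beta A \leq 0$, both immediate from the defining minimality of $d$.

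The remaining $u_2 \cdot u_3 \leq 0$ is the \emph{main obstacle}. Decompose $\mathbf{b}_{N-2} = t_0 \mathbf{b}_{N-1} + v$ with $v \perp \mathbf{b}_{N-1}$; then $u_2 = v - \alpha \mathbf{b}_{N-1}$, $u_3 = -v - \beta \mathbf{b}_{N-1}$, and
\[
u_2 \cdot u_3 \;=\; \alpha\beta A - |v|^2.
\]
Since $v \perp \mathbf{b}_{N-1}$, the determinant identity $|v| \cdot |\mathbf{b}_{N-1}| = |\det(\mathbf{b}_{N-2}, \mathbf{b}_{N-1})| = \varphi_1 - \varphi_2$ reduces the inequality to $\alpha\beta A^2 \leq (\varphi_1 - \varphi_2)^2$. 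I plan to treat this together with the length cases. From $|u_2|^2 = |v|^2 + \alpha^2 A$: whenever $|u_2| > |u_1|$ one gets $|v|^2 > (1-\alpha^2)A = \beta(1+\alpha)A \geq \alpha\beta A$; symmetric reasoning with $\alpha \leftrightarrow \beta$ handles $|u_3| > |u_1|$. This already settles cases (i)--(iii). For case (iv), where both $|u_2|, |u_3| \leq |u_1|$, I intend to exploit the extremality of $(|\mathbf{b}_n|)_{n \in \ZZ}$: the identity $\mathbf{b}_n \cdot \mathbf{b}_{n-1} - \mathbf{b}_{n-1}\cdot \mathbf{b}_{n-2} = a_{n-1}|\mathbf{b}_{n-1}|^2$ combined with the sign change at $n = N$ gives $|\mathbf{b}_n|^2 \geq |\mathbf{b}_{n-1}|^2$ for $n \geq N+1$, and the reverse recursion yields $|\mathbf{b}_{n-2}|^2 > |\mathbf{b}_n|^2$ for $n \leq N-1$, localising the minimum of $(|\mathbf{b}_n|)$ within $\{\mathbf{b}_{N-2},\mathbf{b}_{N-1},\mathbf{b}_N\}$; a failure of Selling in case (iv) would produce a lattice vector violating this localisation via the recursion, yielding a contradiction.

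The four length-case identifications then follow from the same decompositions: the inequalities $|u_j| \leq |u_1|$ translate, via $|u_j|^2 = |v|^2 + \gamma^2 A$ with $\gamma \in \{\alpha,\beta\}$, into upper bounds on $|v|^2$, and combining these with the Selling lower bound $|v|^2 \geq \alpha\beta A$ pins down the admissible values of $\alpha$ and $\beta$ and hence of $d$ and $a_{N-1}$: in cases (ii) and (iv) one obtains $d = 1$ (so $u_2 = \mathbf{b}_{N-2}$), in cases (iii) and (iv) one obtains $d = a_{N-1}$ (so $u_3 = -\mathbf{b}_N$), and in case (iv) both give $a_{N-1} = 1$. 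The concluding assertion that one of $\mathbf{b}_{N-2},\mathbf{b}_{N-1},\mathbf{b}_N$ is the shortest vector of $L_{\varphi_1,\varphi_2,\epsilon}$ is then immediate from these case-by-case identifications together with the classical fact that in any Selling-reduced superbasis of a two-dimensional lattice the shortest nonzero lattice vector lies in $\{\pm u_1, \pm u_2, \pm u_3\}$.
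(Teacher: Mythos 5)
Your argument has the right scaffolding for the three superbasis inequalities $u_i \cdot u_j \le 0$, but it contains two genuine gaps, both of which go back to the same omission: nowhere do you use the continued-fraction structure that the paper relies on, and the facts you need cannot be recovered from the Gram quantities $\alpha,\beta,A,|v|^2$ alone.

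First, the Selling inequality $u_2\cdot u_3 \le 0$ in case (iv). Your projection argument cleanly disposes of cases (i)--(iii) because at least one of $|u_2|>|u_1|$, $|u_3|>|u_1|$ holds there, giving $|v|^2 > \alpha\beta A$ directly. In case (iv) you only sketch a plan (``a failure of Selling in case (iv) would produce a lattice vector violating this localisation via the recursion''), but the localisation of $\min_n|{\mathbf b}_n|$ inside $\{{\mathbf b}_{N-2},{\mathbf b}_{N-1},{\mathbf b}_N\}$ has no obvious bearing on whether $u_2\cdot u_3\le 0$: the inequalities $|u_2|,|u_3|\le |u_1|$ translate into \emph{upper} bounds $|v|^2 \le (1-\alpha^2)A$ and $|v|^2 \le (1-\beta^2)A$, whereas what you need is the \emph{lower} bound $|v|^2 \ge \alpha\beta A$. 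Nothing you wrote produces a lower bound, and the ``contradiction'' is not identified. The paper gets around this by writing $-u_2\cdot u_3 = \epsilon^{-1}\bigl((d-1)\xi_{N-1}+\xi_{N-2}\bigr)\bigl(d\xi_{N-1}+\xi_{N-2}\bigr) + \epsilon\bigl(\cdots\bigr)\bigl(\cdots\bigr)$ in the coordinates adapted to the $\varphi_1,\varphi_2$ eigendirections (with $\xi_n := p_{n-1}^{(+)}-q_{n-1}^{(+)}\varphi_1$), and then invoking that $c\,\xi_{N-1}+\xi_{N-2}$ keeps the sign of $\xi_{N-2}$ for all $c\in\{0,1,\dots,a_{N-1}\}$, because the unique zero of this linear form is the complete quotient $\alpha_{N-1}>a_{N-1}$. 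That sign constancy is a continued-fraction fact that your Gram decomposition cannot see.

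Second, and more seriously, the four length-case identifications do not follow from ``upper bounds on $|v|^2$ combined with the Selling lower bound''. Note that $d=\lceil t_0\rceil$ with $t_0=-B/A$, while $\alpha=t_0-(d-1)$ and $\beta=d-t_0$ only record the fractional placement of $t_0$ in its unit interval; they carry no information about the integer $d$ itself. Concretely, combining $|v|^2\le(1-\alpha^2)A$ with $|v|^2\ge\alpha\beta A$ yields $\alpha(\alpha+\beta)\le 1$, i.e.\ $\alpha\le 1$, which is vacuous and constrains neither $d$ nor $a_{N-1}$. The actual implication $|u_2|\le|u_1|\Rightarrow d=1$ comes from the same continued-fraction mechanism: writing $|u_2|^2-|u_1|^2 = -(u_2-u_1)\cdot u_3$ and decomposing along the $\varphi_1,\varphi_2$ directions, the $\epsilon$-part is strictly positive whenever $d\ge 2$ (because $d-2\ge 0$ keeps both coefficients nonnegative and $\varphi_2<0$), and the $\epsilon^{-1}$-part is also nonnegative because $(d-2)\xi_{N-1}+\xi_{N-2}$ and $d\xi_{N-1}+\xi_{N-2}$ share the sign of $\xi_{N-2}$ (again since their common zero is $\alpha_{N-1}>a_{N-1}$). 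Hence $d\ge 2$ forces $|u_2|>|u_1|$; the contrapositive gives case (ii), and the symmetric argument gives case (iii) and hence case (iv). Your proof never establishes this, and indeed cannot, because it has discarded the directional (indefinite-form) information that distinguishes $(d-2){\mathbf b}_{N-1}+{\mathbf b}_{N-2}$ from ${\mathbf b}_{N-2}$ itself. To repair the proof you would need to re-introduce the factorisation $v\cdot w = \epsilon^{-1}(x-y\varphi_1)(x'-y'\varphi_1)+\epsilon(x-y\varphi_2)(x'-y'\varphi_2)$ and the continued-fraction sign constancy, at which point you would essentially be reproducing the paper's argument.
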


\begin{proof}
From the choice of $N$, the following hold:
$$
	{\mathbf b}_{N} \cdot {\mathbf b}_{N-1} = (a_N {\mathbf b}_{N-1} + {\mathbf b}_{N-2}) \cdot {\mathbf b}_{N-1} \ge 0, \quad {\mathbf b}_{N-1} \cdot {\mathbf b}_{N-2} < 0.
$$
Since $(x {\mathbf b}_{N-1} + {\mathbf b}_{N-2}) \cdot {\mathbf b}_{N-1}$ 
monotonically increases as a function of $x$, there is an integer $1 \le d \le a_{N-1}$ as stated above.

We shall show that $u_1, u_2, u_3$ are a Selling-reduced superbase; clearly, $u_1, u_2$ are a basis of $L_{\varphi_1, \varphi_2, \epsilon}$.
From the definition, $u_1 \cdot u_2 < 0$ and $u_1 \cdot u_3 \le 0$.
Hence, we only need to show that $u_2 \cdot u_3 \le 0$.
For the proof, $N \ge 1$ may be assumed; in fact, for any integer $-M < N$,
$\begin{pmatrix} {\mathbf b}_0 & {\mathbf b}_{-1} \end{pmatrix}$ 
can be replaced with $\begin{pmatrix} {\mathbf b}_{-M} & {\mathbf b}_{-M-1} \end{pmatrix}$, 
by replacing $\varphi_1, \varphi_2$ with $[a_{-M}, a_{-M+1}, \ldots, ]$, $-[0, a_{-M-1}, a_{-M-2}, \ldots ]$,
and changing $\epsilon$ accordingly. 
As a result, $N$ can be set to a positive integer.

From the process of the continued fraction expansion, 
$a_N$ ($N \ge 1$) is the largest integer among all the $c$'s for which the following two have different signatures:
\begin{eqnarray*}
\frac{ p_{N-1}^{(+)} }{ q_{N-1}^{(+)} } - \varphi_1 
&=& [a_0, a_1, \ldots, a_{N-1}] - \varphi_1, \\
\frac{ c p_{N-1}^{(+)} + p_{N-2}^{(+)} }{ c q_{N-1}^{(+)} + q_{N-2}^{(+)} } - \varphi_1
&=& [a_0, a_1, \ldots, a_{N-1}, c] - \varphi_1.
\end{eqnarray*}
In particular, the following two have the same signature for any $1 \le r \le a_N$:
\begin{eqnarray*}
\frac{ (r-1) p_{N-1}^{(+)} + p_{N-2}^{(+)} }{(r-1) q_{N-1}^{(+)} + q_{N-2}^{(+)}} - \varphi_1
&=& [a_0, a_1, \ldots, a_{N-1}, r-1] - \varphi_1, \\
\frac{ r p_{N-1}^{(+)} + p_{N-2}^{(+)} }{ r q_{N-1}^{(+)} + q_{N-2}^{(+)} } - \varphi_1
&=& [a_0, a_1, \ldots, a_{N-1}, r] - \varphi_1.
\end{eqnarray*}

Thus, $u_2 \cdot u_3 \le 0$ is obtained from the following: 
\begin{small}
\begin{eqnarray*}
- u_2 \cdot u_3
&=& 
((d-1) q_{N-1}^{(+)} + q_{N-2}^{(+)})(d q_{N-1}^{(+)} + q_{N-2}^{(+)}) \\
& & \hspace{-20mm} \times 
\begin{Bmatrix} 
\epsilon^{-1} \left( \frac{ (d-1) p_{N-1}^{(+)} + p_{N-2}^{(+)} }{(d-1) q_{N-1}^{(+)} + q_{N-2}^{(+)}} - \varphi_1 \right)
\left( \frac{ d p_{N-1}^{(+)} + p_{N-2}^{(+)} }{ d q_{N-1}^{(+)} + q_{N-2}^{(+)} } - \varphi_1 \right)
+ \epsilon \left( \frac{ (d-1) p_{N-1}^{(+)} + p_{N-2}^{(+)} }{(d-1) q_{N-1}^{(+)} + q_{N-2}^{(+)} } - \varphi_2 \right)
\left( \frac{ d p_{N-1}^{(+)} + p_{N-2}^{(+)} }{ d q_{N-1}^{(+)} + q_{N-2}^{(+)} } - \varphi_2 \right)
\end{Bmatrix}.
\end{eqnarray*}
\end{small}

Therefore, $u_1, u_2, u_3$ is Selling reduced, which implies that 
one of $u_1, u_2, u_3$ is the shortest vector of $L$.

As for the second statement, the following equation is obtained in a similar way as the above:
\begin{eqnarray}
\abs{u_2} \le \abs{u_1}
& \Leftrightarrow &
-(u_2 - u_1) \cdot u_3 \le 0 \nonumber \\
& \Rightarrow &
\left( \frac{ (d-2) p_{N-1}^{(+)} + p_{N-2}^{(+)} }{(d-2) q_{N-1}^{(+)} + q_{N-2}^{(+)}} - \varphi_1 \right)
\left( \frac{ d p_{N-1}^{(+)} + p_{N-2}^{(+)} }{ d q_{N-1}^{(+)} + q_{N-2}^{(+)}} - \varphi_1 \right) < 0,
\label{eq: cmp1} \\
\abs{u_3} \le \abs{u_1}
& \Leftrightarrow &
-u_2 \cdot (u_3 - u_1) \le 0 \nonumber \\
& \Rightarrow &
\left( \frac{ (d+1) p_{N-1}^{(+)} + p_{N-2}^{(+)} }{ (d+1) q_{N-1}^{(+)} + q_{N-2}^{(+)}} - \varphi_1 \right)
\left( \frac{ (d-1) p_{N-1}^{(+)} + p_{N-2}^{(+)} }{ (d-1) q_{N-1}^{(+)} + q_{N-2}^{(+)}} - \varphi_1 \right) < 0. \quad \quad \label{eq: cmp2}
\end{eqnarray}

Because the values in the parentheses of Eq.(\ref{eq: cmp1}) have different signatures owing to the inequality, 
$\abs{u_2} \le \abs{u_1}$ implies $d = 1$.
Similarly, $\abs{u_3} \le \abs{u_1}$ implies $d = a_{N-1}$. \qed

\end{proof}

The determination of $\Delta_n^\prime$, $\Delta^\prime(L(B_n))$
is equivalent to that of the $\lambda_n^\prime, \lambda^\prime(B_n)$.
\begin{eqnarray*}
\lambda_n^\prime &:=& \sup_{B_n \in GL_n(\RR), \det B_n = \pm 1} \lambda^\prime(B_n), \\
\lambda^\prime(B_n) &:=& \inf_{ {D \in GL_n(\RR)\text{: diagonal}}\atop{\det D = \pm 1} } 
\min\, L(D B_n). \nonumber
\end{eqnarray*}
In fact, for any $B_n \in GL_n(\RR)$ with the determinant $\pm 1$,
\begin{eqnarray}
\Delta^\prime(L(B_n)) &=& \frac{ ( \pi \lambda^\prime(B_n) / 4)^{n/2} }{ \Gamma(n/2 + 1) }. \label{eq: delta^prime(B_n) and lambda^prime(B_n) }
\end{eqnarray}

The determination of $\lambda^\prime_n$ and $\lambda^\prime(B_n)$
is boiled down to the following problem about the products of linear forms $\lambda_n$ and $\lambda(B_n)$, as proved in Lamma~\ref{lem: lemma 1}.
\begin{eqnarray}
\lambda_n &:=& \sup_{ {B_n = (b_{ij}) \in GL_n(\RR),}\atop{\det B = \pm 1}} \lambda(B_n) \label{eq: definition of lambda_n}, \\
\lambda(B_n) &:=& \inf_{0 \ne (x_1, \ldots, x_n) \in \ZZ^n } \abs{ \prod_{i=1}^n (b_{i1} x_1 + \cdots + b_{in} x_n) }. \nonumber
\end{eqnarray}

\begin{lemma}\label{lem: lemma 1}
\begin{enumerate}[(1)]
\item For any $B = (b_{ij}) \in GL_n(\RR)$ with $\lambda(B_n) \ne 0$, 
the following equality holds:
\begin{eqnarray}\label{eq: inequality by AM-GM}
\lambda^\prime(B_n) = n \left( \lambda(B_n) \right)^{2/n}.
\end{eqnarray}

\item 
If a totally real algebraic number field $K$ of degree $n$ has discriminant $d_K$, then, $\lambda_n \ge d_K^{-1/2}$. 
Furthermore, 
for each $n$, some $B_n$ attains the supremum $\Delta_n^\prime$, and
\begin{eqnarray*}
\Delta^\prime_n &=& 
\frac{ (\pi \lambda_n^\prime/4)^{n/2} }{ \Gamma(n/2+1) }
= \frac{ (\pi n/4)^{n/2} \lambda_n }{ \Gamma(n/2+1) } \ge \frac{ (\pi n/4)^{n/2} d_K^{-1/2} }{ \Gamma(n/2+1) }.
\end{eqnarray*}
\end{enumerate}
\end{lemma}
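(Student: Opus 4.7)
My plan is to prove the two parts separately, handling the identity in (1) via a direct AM--GM argument and then feeding it into (2) together with an explicit number-field construction.

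For part (1), writing $L_i(x) = \sum_j b_{ij} x_j$, the AM--GM inequality applied to the $n$ terms $d_i^2 L_i(x)^2$ gives, for any diagonal $D$ with $\det D = \pm 1$ and any nonzero $x \in \ZZ^n$,
\begin{equation*}
|D B_n x|^2 = \sum_{i=1}^n d_i^2 L_i(x)^2 \ge n \Bigl( \prod_{i=1}^n d_i^2 L_i(x)^2 \Bigr)^{1/n} = n \Bigl| \prod_{i=1}^n L_i(x) \Bigr|^{2/n},
\end{equation*}
since $\prod_i d_i^2 = 1$. Taking infima in $x$ and then in $D$ yields $\lambda'(B_n) \ge n \lambda(B_n)^{2/n}$. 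For the matching upper bound, I would pick, for each $\epsilon > 0$, a nonzero $x \in \ZZ^n$ with $|\prod_i L_i(x)| < \lambda(B_n) + \epsilon$ and then equalize the AM--GM terms by setting $d_i := c / |L_i(x)|$ with $c = |\prod_i L_i(x)|^{1/n}$; this only makes sense because the hypothesis $\lambda(B_n) \ne 0$ forces every $L_i(x)$ to be nonzero. One checks $\det D = \pm 1$ and $|D B_n x|^2 = n c^2$, so letting $\epsilon \to 0$ gives $\lambda'(B_n) \le n \lambda(B_n)^{2/n}$.

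For part (2), the lower bound $\lambda_n \ge d_K^{-1/2}$ comes from the explicit unimodular matrix $B_n := d_K^{-1/(2n)}(\sigma_i(\omega_j))_{i,j}$ built from any integral basis $\omega_1, \ldots, \omega_n$ of $\mathcal{O}_K$ and the $n$ real embeddings $\sigma_1, \ldots, \sigma_n$ of $K$; the normalizing scalar is chosen so that $|\det B_n| = 1$ using $|\det(\sigma_i(\omega_j))| = \sqrt{d_K}$. For nonzero $x \in \ZZ^n$ the element $\alpha = \sum_j x_j \omega_j$ is a nonzero algebraic integer, so its norm is a nonzero rational integer, which gives
\begin{equation*}
\Bigl| \prod_{i=1}^n L_i(x) \Bigr| = d_K^{-1/2} |N_{K/\QQ}(\alpha)| \ge d_K^{-1/2}.
\end{equation*}
The chain of equalities for $\Delta_n^\prime$ is then a direct substitution: combining (\ref{eq: delta^prime(B_n) and lambda^prime(B_n) }) with part (1) gives $\Delta^\prime(L(B_n)) = (\pi n/4)^{n/2} \lambda(B_n) / \Gamma(n/2+1)$, and because this is a linear function of $\lambda(B_n)$, the supremum passes through to produce the displayed formula.

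The step I expect to be the main obstacle is the attainment assertion that the supremum defining $\Delta_n^\prime$ (equivalently, of $\lambda_n$) is actually realized by some $B_n$. My plan is to invoke Mahler's compactness theorem on the space of unimodular lattices modulo the joint action of $SL_n(\ZZ)$ and the diagonal unimodular subgroup, using upper semi-continuity of $\lambda(\cdot)$ together with the observation that a lattice possessing a very short vector forces some $L_i(x)$ to be small and hence $\lambda$ to be small; this lets a maximizing sequence be pushed into a compact region of the quotient, where a limit point provides the required $B_n$. Everything else in the lemma reduces to elementary inequalities and the norm form of a totally real number field.
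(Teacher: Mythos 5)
Your proposal is correct and follows essentially the same approach as the paper: AM--GM with the equalizing diagonal scaling for part (1), and the norm-form construction from a totally real number field for part (2). At the attainment step the paper simply cites Gruber--Lekkerkerker (Theorem 9, \S 17, Chap.~3) once it observes that $\lambda_n > 0$ makes the star body $\{|x_1\cdots x_n|\le 1\}$ of finite type, whereas you propose to rederive the same fact via Mahler compactness and upper semicontinuity of $\lambda$ --- this is the same result and underlying theory, just unpacked instead of cited.
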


\begin{proof}
\begin{enumerate}[(1)]
\item From the inequality of arithmetic and geometric means,
the part $\ge$ is proved since we have the following: 
\begin{eqnarray*}
\lambda^\prime(B_n) &=&
\inf_{ { d_1, \ldots, d_n \in \RR,\ d_1 \cdots d_n = \pm 1, }\atop{ 0 \ne (x_1, \ldots, x_n) \in \ZZ^n } } \sum_{i=1}^n d_i^2 (b_{i1} x_1 + \cdots + b_{in} x_n)^2, \\
\lambda(B_n) &=&
\inf_{0 \ne (x_1, \ldots, x_n) \in \ZZ^n} \abs{ \prod_{i=1}^n (b_{i1} x_1 + \cdots + b_{in} x_n) }.
\end{eqnarray*}

Furthermore, $ > $ is impossible, because
if $\abs{ \prod_{i=1}^n (b_{i1} x_1 + \cdots + b_{in} x_n) } = v$
for some $0 \ne (x_1, \ldots, x_n) \in \ZZ^n$, 
the left-hand side of Eq.(\ref{eq: inequality by AM-GM}) cannot be more than $n v^{2/n}$,
which is seen by putting $d_i = \abs{ v^{1/n} / (b_{i1} x_1 + \cdots + b_{in} x_n) }$.

\item
Let $\sigma_1, \ldots, \sigma_n$ be distinct embeddings of $K$ into $\CC$ over $\QQ$, $b_1, \ldots, b_n$ be a basis of the ring ${\mathfrak o}_K$ of integers of $K_n$ over $\ZZ$, and $B_n \in GL_n(\RR)$ be the matrix with $b_j^{\sigma_i}$ in the $(i, j)$-entry.
From $\abs{ \det B_n } = \sqrt{ d_K }$ and $\displaystyle\min_{0 \ne \alpha \in {\mathfrak o}_K} \abs{ \sigma_1(\alpha) \cdots \sigma_n(\alpha) } = 1$, we have
$$
\lambda_n \ge \lambda(d_K^{-1/2n} B_n) = d_K^{-1/2}.
$$
This implies that $\lambda_n > 0$, and thus, the following star body is finite type. 
$$
\{ (x_1, \ldots, x_n) \in \RR^n : \abs{ x_1 \cdots x_n } \le 1 \}.
$$
Hence, $\lambda(B_n) = \lambda_n$ holds for some $B_n \in GL_n(\RR)$ with $\det B_n = \pm 1$ (Theorem 9 of $\S$17, chap.3 in \cite{Gruber87}). 
For such a $B_n$, 
$\lambda^\prime(B_n) = \lambda_n^\prime$ and
$\Delta^\prime(L(B_n)) = \Delta_n^\prime$
hold owing to Eqs.(\ref{eq: delta^prime(B_n) and lambda^prime(B_n) }) and (\ref{eq: inequality by AM-GM}).
The last inequality is also clear.
\qed
\end{enumerate}
\end{proof}


The relation between Markoff theory and phyllotaxis was not clarified until very recently \cite{Bergeron2019}.
Theorem 1 of \cite{Bergeron2019} handles a special case of our Eq.(\ref{eq: 1st eq of thm}),
as their growth capacity is a constant multiple of the packing density. 
Markoff theory can give us more insights. As seen in (2) of Theorem \ref{thm: theorem 1}, the golden lattice given by $B_2$ is more optimal than the lattice by Eq.(\ref{eq: basis for the Vogel spiral}) that has been used for the golden angle method.

\begin{theorem}\label{thm: theorem 1}
\begin{enumerate}[(1)]

\item\label{item: optimum case for n=2,3} 
For $n = 2$--$5$, let $B_n \in GL_n(\RR)$ be the matrix with $b_j^{\sigma_i}$ in each $(i, j)$-entry, where $\sigma_1, \ldots, \sigma_n$ are all the embeddings of the following $K_n$ into $\CC$ over $\QQ$, and $b_1, \ldots, b_n$ are a basis of the ring of integers of the fields $K_n$ as a $\ZZ$-module.
\begin{itemize}
\item[$\bullet$] $K_2 = \QQ(\zeta_5 + \zeta_5^{-1})$, $\zeta_5 = e^{2 \pi \sqrt{-1} / 5}$,

\item[$\bullet$] $K_3 = \QQ(\zeta_7 + \zeta_7^{-1})$, $\zeta_7 = e^{2 \pi \sqrt{-1} / 7}$,

\item[$\bullet$] $K_4 = \QQ(\sqrt{7 + 2 \sqrt{5}})$,

\item[$\bullet$] $K_5 = \QQ(\zeta_{11} + \zeta_{11}^{-1})$, $\zeta_{11} = e^{2 \pi \sqrt{-1} / 11}$,
\end{itemize}
For the above $B_n$, the following hold:
\begin{eqnarray*}
\Delta_2^\prime = \Delta^\prime(L(B_2)) = \frac{\pi}{ 2 \sqrt{5} } \approx 0.702, & &
\Delta_3^\prime = \Delta^\prime(L(B_3)) = \frac{ \sqrt{3} \pi }{ 14 } \approx 0.389, \\ 
\Delta_4^\prime \ge \Delta^\prime(L(B_4)) = \frac{ \pi^2 }{ 10\sqrt{29} } \approx 0.183, & & 
\Delta_5^\prime \ge \Delta^\prime(L(B_5)) = \frac{ 5 \sqrt{5} \pi^2 }{ 12 \cdot 11^{2} } \approx 0.076. 
\end{eqnarray*}

(For comparison, the packing density of the densest lattice packings are
$\pi/2\sqrt{3} \approx 0.907$ for $n = 2$, 
$\pi/3\sqrt{2} \approx 0.740$ \cite{Gauss1840} for $n = 3$, 
$\pi^2/16 \approx 0.617$ \cite{Korkine1873} for $n = 4$, and 
$\pi^2/15\sqrt{2} \approx 0.465$ \cite{Korkine1873} for $n = 5$.)


\item\label{item: packing densities} 
For any distinct $\varphi_1, \varphi_2 \in \RR \setminus \QQ$,
let $L_{\varphi_1, \varphi_2}$, $L_{\varphi_1, \varphi_2, \epsilon}$ be the lattices $L(B_1)$, $L(B_{\epsilon})$ generated by the column vectors of the following matrices:
$$
B_{1} =
\begin{pmatrix} 1 & -\varphi_1 \\ 1 & -\varphi_2 \end{pmatrix}, \quad
B_{\epsilon} =
\begin{pmatrix} \epsilon^{-1/2} & 0 \\ 0 & \epsilon^{1/2} \end{pmatrix}
\begin{pmatrix} 1 & -\varphi_1 \\ 1 & -\varphi_2 \end{pmatrix}.
$$
Then,
\begin{eqnarray}
\displaystyle\liminf_{ \epsilon \rightarrow +0 } \Delta \left( L_{\varphi_1, \varphi_2, \epsilon} \right) &=& \frac{\pi}{2 {\mathcal L}(\varphi_1)}, \label{eq: 1st eq of thm} \\
\Delta^\prime( L_{\varphi_1, \varphi_2} ) 
= \displaystyle\inf_{ \epsilon } \Delta \left( L_{\varphi_1, \varphi_2, \epsilon} \right)
&=& \displaystyle\frac{\pi}{2 {\mathcal M}(f)},
\label{eq: 2nd eq of thm}
\end{eqnarray}
where ${\mathcal L}(\varphi_1)$ is the Lagrange number of $\varphi_1$, 
and ${\mathcal M}(f) := \sqrt{d(f)} / m(f)$ is the element of the Markoff spectrum that corresponds to the quadratic form $f(x, y) := (x - \varphi_1 y)(x - \varphi_2 y)$.

\end{enumerate}

\end{theorem}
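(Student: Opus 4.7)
The plan is to dispatch part~(\ref{item: packing densities}) first since it carries the analytic content, and then reduce part~(\ref{item: optimum case for n=2,3}) to Lemma~\ref{lem: lemma 1} together with classical facts on products of linear forms. A generic nonzero lattice vector in $L_{\varphi_1,\varphi_2,\epsilon}$ has the form $x{\mathbf b}_0 + y{\mathbf b}_{-1}$ with $(x,y)\in\ZZ^2\setminus\{0\}$ and squared length $\epsilon^{-1}(x-y\varphi_1)^2 + \epsilon(x-y\varphi_2)^2$, while $|\det B_\epsilon| = |\varphi_1 - \varphi_2|$ is independent of $\epsilon$, giving $\Delta(L_{\varphi_1,\varphi_2,\epsilon}) = \pi\min L_{\varphi_1,\varphi_2,\epsilon}/(4|\varphi_1-\varphi_2|)$. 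For Eq.~(\ref{eq: 2nd eq of thm}) the arithmetic--geometric mean inequality gives $\epsilon^{-1}(x-y\varphi_1)^2 + \epsilon(x-y\varphi_2)^2 \ge 2|f(x,y)| \ge 2m(f)$ with equality at $\epsilon = |x-y\varphi_1|/|x-y\varphi_2|$; choosing $\epsilon$ from a near-minimizer $(x_0,y_0)$ of $|f|$ then shows $\inf_\epsilon \min L_{\varphi_1,\varphi_2,\epsilon} = 2m(f)$, and $\sqrt{d(f)} = |\varphi_1-\varphi_2|$ converts this to $\pi/(2\mathcal{M}(f))$.

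For Eq.~(\ref{eq: 1st eq of thm}), I would abbreviate $A_n := p_{n-1}^{(+)} - q_{n-1}^{(+)}\varphi_1$ and $B_n := p_{n-1}^{(+)} - q_{n-1}^{(+)}\varphi_2$, so that $|{\mathbf b}_n|^2 = \epsilon^{-1} A_n^2 + \epsilon B_n^2$ for $n \ge 1$. The standard convergent identity $q_{n-1}^{(+)}|A_n| = 1/\lambda^{(n)}$ with $\lambda^{(n)} := [a_n, a_{n+1},\ldots] + [0, a_{n-1}, \ldots, a_1]$, combined with $B_n = A_n + q_{n-1}^{(+)}(\varphi_1-\varphi_2)$, yields $|A_n B_n| = |\varphi_1-\varphi_2|/\lambda^{(n)} + O(A_n^2)$, and the formula cited just after Definition~\ref{dfn: Lagrange number} reads $\mathcal{L}(\varphi_1) = \limsup_n \lambda^{(n)}$. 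The $\le$ direction then follows by selecting $\epsilon_n := |A_n/B_n| \to 0$ along a subsequence for which $\lambda^{(n)} \to \mathcal{L}(\varphi_1)$, since $|{\mathbf b}_n|^2 = 2|A_n B_n|$ at that $\epsilon_n$. For the $\ge$ direction, Proposition~\ref{prop: Selling reduced basis} confines the shortest vector of $L_{\varphi_1,\varphi_2,\epsilon}$ to $\{{\mathbf b}_{N-2}, {\mathbf b}_{N-1}, {\mathbf b}_N\}$ with $N = N(\epsilon) \to \infty$ as $\epsilon \to 0$, and the AM--GM bound $|{\mathbf b}_n|^2 \ge 2|A_n B_n|$ applied to each of these three, together with the eventual estimate $\lambda^{(n)} \le \mathcal{L}(\varphi_1) + \delta$, closes the argument after $\delta \to 0$. (The sign assumption $\varphi_1 > 0 > \varphi_2$ of Proposition~\ref{prop: Selling reduced basis} is arranged without loss of generality by an integer shift, which preserves both the lattice and $\mathcal{L}$, $\mathcal{M}$.)

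For part~(\ref{item: optimum case for n=2,3}), Lemma~\ref{lem: lemma 1}(2) applied to a $\ZZ$-basis of $\mathcal{O}_{K_n}$ and the $n$ real embeddings of $K_n$ yields $|\det B_n| = \sqrt{d_{K_n}}$, so $\Delta^\prime(L(B_n)) = (\pi n/4)^{n/2}/(\Gamma(n/2+1)\sqrt{d_{K_n}})$; substituting the discriminants $d_{K_2}=5$, $d_{K_3}=49$, $d_{K_4}=725=5^2\cdot 29$, $d_{K_5}=11^4$ (the maximal real subfield of $\QQ(\zeta_p)$ has discriminant $p^{(p-3)/2}$, and $K_4$ is generated by a root of $x^4-14x^2+29$) reproduces the four numerical values. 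The equalities $\Delta_n^\prime = \Delta^\prime(L(B_n))$ for $n = 2, 3$ reduce via Lemma~\ref{lem: lemma 1} to $\lambda_n = d_{K_n}^{-1/2}$: for $n=2$ this is the Markoff theorem recalled in Section~1 ($\inf \mathcal{M}(f) = \sqrt{5}$), and for $n=3$ it is Cusick's analogous theorem that $\lambda_3 = 1/7$ with the extremal form supplied by $\QQ(\zeta_7+\zeta_7^{-1})$; for $n=4,5$ only the inequality is asserted. The most delicate step is the lower bound on $\liminf_{\epsilon\to 0}$ in Eq.~(\ref{eq: 1st eq of thm}), where controlling $|{\mathbf b}_n|^2$ for $n=N-2, N-1, N$ uniformly while $N(\epsilon)\to\infty$ is exactly what Proposition~\ref{prop: Selling reduced basis} was engineered to furnish; the remaining ingredients ($\limsup \lambda^{(n)} = \mathcal{L}(\varphi_1)$, and the classical theorems of Markoff and Cusick) are standard or already cited in the excerpt.
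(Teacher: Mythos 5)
Your proof is correct and follows essentially the same route as the paper's: Eq.~(\ref{eq: 2nd eq of thm}) via the AM--GM computation that underlies Lemma~\ref{lem: lemma 1}(1), Eq.~(\ref{eq: 1st eq of thm}) via Proposition~\ref{prop: Selling reduced basis} to confine the shortest vector and the convergent identity $q_{n-1}^{(+)}\lvert p_{n-1}^{(+)}-q_{n-1}^{(+)}\varphi_1\rvert = 1/\ell_{n-1}(\varphi_1)$, and part~(1) by feeding the field discriminants $5, 49, 725, 11^4$ into Lemma~\ref{lem: lemma 1}(2) together with the known values of $\lambda_n$. One small correction: the equality $\lambda_3=1/7$ is Davenport's theorem (1938), which is what the paper cites, not a result of Cusick; Cusick appears in the paper only for the conjecture that norm forms exhaust the positive values of $\lambda(B_n)$ for $n\ge 3$.
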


\begin{remark}
Eq.(\ref{eq: 1st eq of thm}) does not hold for any rational $\varphi_1$, because ${\mathcal L}(\varphi_1) = 0$ if $\varphi_1 \in \QQ$ (Corollary~1.2, \cite{Aigner2013}).
\end{remark}
\begin{remark}
When $B$ is as in (1) \IE the rows are conjugate of each other, 
the product $\abs{ \prod_{i=1}^n (b_{i1} x_1 + \cdots + b_{in} x_n) }$ is called a \textit{norm form}.
It is conjectured that for $n \ge 3$, $\lambda(B_n) > 0$ implies that $B_n$ is a norm form \cite{Cusick87}.
\end{remark}

\begin{remark}
The upper bounds $\lambda_4 \le 3/(20 \sqrt{5})$ \cite{Zilinskas41} and $\lambda_5 \le 1/57.02$ \cite{Godwin50} are also known.
Therefore, $\Delta_4^\prime \le 3 \pi^2 /(40 \sqrt{5}) \approx 0.331$ and $\Delta_5^\prime \le 5 \sqrt{5} \pi^2 /(12 \cdot 57.02) \approx 0.161$.
\end{remark}

\begin{proof}
\begin{enumerate}[(1)]
\item 
This part is an immediate consequence of Lemma~\ref{lem: lemma 1} and the known results: $\lambda_2 = 1/\sqrt{5}$ \cite{Korkine1873}, \cite{Markoff1879}, $\lambda_3 = 1/7$ \cite{Davenport38}, $\lambda_4 \ge 1/5 \sqrt{29}$ \cite{Mayer29} 
and $\lambda_5 \ge 11^{-2}$ \cite{Hunter57}.
The lower bounds for $\lambda_4, \lambda_5$ are from the smallest discriminants among all the totally real quartic and quintic fields.

\item 
As a result of Lemma~\ref{lem: lemma 1}, Eq.(\ref{eq: 2nd eq of thm}) is obtained as follows: 
$$
\Delta^\prime( L_{\varphi_1, \varphi_2 } )
= \frac{\pi \lambda(B_1) }{ 2 \abs{ \det B_1 } }
= \frac{\pi \displaystyle\inf_{0 \ne (x, y) \in \ZZ^n} f(x, y) }{2 \abs{ \varphi_1 - \varphi_2 } }
= \frac{ \pi m(f) }{ 2 \sqrt{d(f)} } = \frac{\pi }{ 2 {\mathcal M}(f) }. 
$$

Eq.(\ref{eq: 1st eq of thm}) is proved as follows; 
$\varphi_1 > 0 > \varphi_2$ may be assumed,
by replacing $B_\epsilon$ with 
$D B_\epsilon g$ for some for some diagonal $D \in GL_2(\RR)$ and $g \in GL_2(\ZZ)$.
This replaces each $\varphi_i$ by its linear fractional transformation. Hence, it does not change their Lagrange numbers.

From Proposition~\ref{prop: Selling reduced basis},  
the shortest vector of $L_{\varphi_1, \varphi_2, \epsilon}$ is equal to $p_N^{(+)} {\mathbf b}_1 - q_N^{(+)} {\mathbf b}_2$ for sufficiently small $\epsilon > 0$,
where $p_N^{(+)}/q_N^{(+)}$ is the $N$'th convergent of $\varphi_1$. 
Furthermore, the following is proved as in Lemma~\ref{lem: lemma 1}.
\begin{eqnarray*}
\displaystyle\liminf_{ \epsilon \rightarrow +0 } \min L_{\varphi_1, \varphi_2, \epsilon}
&=& 
\liminf_{\epsilon \rightarrow +0} \inf_{N > 0}
\{ \epsilon^{-1} (p_N^{(+)} - q_N^{(+)} \varphi_1)^2 + \epsilon (p_N^{(+)} - q_N^{(+)} \varphi_2)^2 \} \\
&=& 2 \liminf_{N \rightarrow \infty} \abs{ (p_N^{(+)} - q_N^{(+)} \varphi_1) (p_N^{(+)} - q_N^{(+)} \varphi_2) } \\
&=& 2 \abs{ \varphi_1 - \varphi_2 } \liminf_{N \rightarrow \infty} \abs{ q_N^{(+)} (p_N^{(+)} - q_N^{(+)} \varphi_1) } \\
&=& \frac{ 2 \abs{ \varphi_1 - \varphi_2 } }{ \limsup_{N \rightarrow \infty} \ell_N(\varphi_1) }, \quad (\CF \text{proof of Proposition1.22 of \cite{Aigner2013}}) \\
\ell_n(\varphi_1) &:=& [a_{n+1}, a_{n+2}, \ldots ] + [0, a_{n}, a_{n-1}, \ldots, a_1 ].
\end{eqnarray*}

Thus, Eq.(\ref{eq: 1st eq of thm}) is proved as follows:
$$
\displaystyle\liminf_{ \epsilon \rightarrow +0 } \Delta \left( L_{\varphi_1, \varphi_2, \epsilon} \right) =
\frac{\pi }{2 \limsup_{N \rightarrow \infty} \ell_N(\varphi_1) } = \frac{\pi }{ 2 {\mathcal L}(\varphi) }. 
$$
\qed
\end{enumerate}
\end{proof}

As an immediate consequence of Theorem~\ref{thm: theorem 1}, 
it is possible to improve 
the packing around the origin of the Vogel spiral by using the optimal lattice basis $B_2$ in Theorem~\ref{thm: theorem 1} 
(Figure~\ref{fig: Vogel spiral 2}).
More obvious examples will be provided in Figure~\ref{fig: circle packings obtained from different lattice bases}
of Section~\ref{subsec: Solutions by the method of separation of variables}. 
\begin{figure}[htbp]
\scalebox{0.45}{\includegraphics{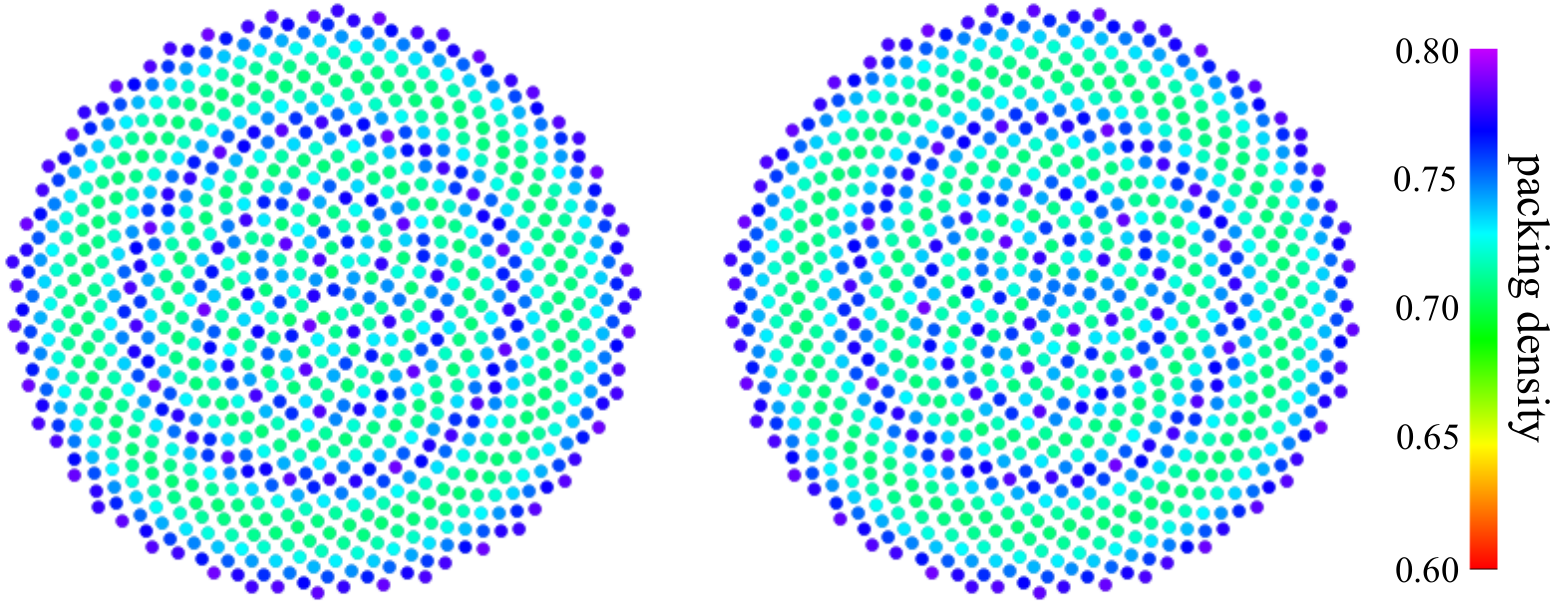}}
\caption{Original Vogel spiral (left) and a packing obtained from the lattice basis $B_2$ in Theorem~\ref{thm: theorem 1} (right).
The lattice map $f$ to use is explained in Example~\ref{exa: case of (a) and (b)}. 
}
\label{fig: Vogel spiral 2} 
\end{figure}



\begin{example}[Vogel spirals for quadratic irrationals $\varphi$ with ${\mathcal L}(\varphi) < 3$]
\label{exa: Vogel spiral for quadratic irrationals}
The $\gamma_m$ defined by Eq.(\ref{eq: Markoff quadratic irrationals}) has the Lagrange number ${\mathcal L}(\gamma_m) = \sqrt{9 - 4/m^2}$. 
Hence, the basis matrix 
${\mathcal B}_m = \begin{pmatrix} 1 & - \gamma_m \\ 1 & - \overline{\gamma}_m \end{pmatrix}$
fulfills the following for any Markoff number $m$.
$$
\Delta^\prime(L({\mathcal B}_m)) = \frac{ \pi }{ 2 {\mathcal L}(\gamma_m) } = \frac{ \pi }{ 2 \sqrt{9 - 4/m^2} } > \frac{ \pi }{ 6 } \approx 0.5236. 
$$ 

The quadratic irrationals with the smallest Lagrange numbers are
$\gamma_1 = (1 + \sqrt{5})/2 = [\overline{1}]$, 
$\gamma_2 = 1 + \sqrt{2} = [\overline{2}]$, 
$\gamma_5 = (9 + \sqrt{221})/10 = [\overline{2, 2, 1, 1}]$.
For each $m = 1, 2, 5$, the packing density of $L(D {\mathcal B}_m)$ is not less than the following value, 
regardless of the diagonal $D \in GL_2(\RR)$:
\begin{eqnarray*}
\Delta^\prime(L({\mathcal B}_1)) &=& \frac{ \pi }{ 2 \sqrt{5} } \approx 0.7025, \\
\Delta^\prime(L({\mathcal B}_2)) &=& \frac{ \pi }{ 4 \sqrt{2} } \approx 0.5554, \\
\Delta^\prime(L({\mathcal B}_5)) &=& \frac{ 5 \pi }{ 2 \sqrt{221} } \approx 0.5283. 
\end{eqnarray*}

\end{example}

%
%
\section{Lattice maps for packing of the Euclidean spaces}
\label{sec: Application to aperiodic packings}

In this section, it is assumed that $n = N$, and
$f({\mathbf x}) = (f_1({\mathbf x}), \ldots, f_n({\mathbf x})) \in C^3({\mathfrak D}, \RR^n)$ defined on open subset ${\mathfrak D} \subset \RR^n$, satisfies the properties ($\star$), ($\star\star$):
\begin{itemize}
\item[($\star$)] For any ${\mathbf x} \in {\mathfrak D}$, there are an orthogonal matrix $U({\mathbf x})$ of degree $n$
and a diagonal matrix $\Phi({\mathbf x})$ with the diagonal entries $\phi_1({\mathbf x}), \ldots, \phi_n({\mathbf x}) \ne 0$ that satisfy the following:
\begin{eqnarray*}
D f({\mathbf x}) :=
\begin{pmatrix}
\partial f_1 / \partial x_1 & 
\cdots & 
\partial f_1 / \partial x_n \\ 
\vdots & 
& 
\vdots \\ 
\partial f_N / \partial x_1 & 
\cdots & 
\partial f_N / \partial x_n \\ 
\end{pmatrix}
= 
U({\mathbf x}) \Phi({\mathbf x}).
\end{eqnarray*}

\item[($\star\star$)] $\displaystyle\prod_{i=1}^n \phi_i({\mathbf x}) = c$ for some constant $c \ne 0$.

\end{itemize}

After deriving all the PDEs for $\Phi({\mathbf x})$ and $U({\mathbf x})$ in Proposition~\ref{prop: PDE system},
a family of the PDE solutions are provided by using solutions of inviscid Burgers equation (Section~\ref{subsec: PDEs and solutions by the method of characteristics}) and separation of variables (Section~\ref{subsec: Solutions by the method of separation of variables}).

\subsection{System of PDEs and solutions provided by inviscid Burgers equation}
\label{subsec: PDEs and solutions by the method of characteristics}

First we assume only ($\star$), and 
the constraint ($\star\star$) will not be used until Example~\ref{exa: case of n=2}.
Since $f({\mathbf x}) \in C^3({\mathfrak D}, \RR^n)$ and the Jacobian matrix $D f({\mathbf x})$ fulfills $\tr{(D f)} D f = \Phi({\mathbf x})^2$,
$\phi_1({\mathbf x}), \ldots, \phi_{n}({\mathbf x})$ are functions of class $C^2$.
Let $A_k({\mathbf x})$ be the matrix defined by $A_k({\mathbf x}) := U({\mathbf x})^{-1} (U({\mathbf x}))_{x_k}$.
From $(\tr{U} U)_{x_k} = O$ and $(U_{x_k})_{x_j} = (U_{x_j})_{x_k}$, the following are obtained:
\begin{itemize}
\item[(a)] $\tr{A}_k + A_k = O$.
\item[(b)] 
$
A_j A_k - A_k A_j = (A_j)_{x_k} - (A_k)_{x_j}.
$
\end{itemize}

In particular, Eqs.(\ref{eq: 1st eq}) and (\ref{eq: 2nd eq}), which are derived only from $(\star)$ and $n = N$,
are the PDEs called Lam{\' e} equations \cite{Zakharov98}.
\begin{proposition}\label{prop: PDE system}
Let ${\mathfrak D} \subset \RR^n$ be a simply-connected open subset. 
The $\phi_1({\mathbf x}), \ldots, \phi_{n}({\mathbf x})$ that fulfill ($\star$)
for some $f \in C^3({\mathfrak D}, \RR^n)$,
are provided by solving the following PDEs:
\begin{eqnarray}
\phi_i^{-1} (\phi_i)_{x_k} (\phi_j)_{x_i}
+ \phi_k^{-1} (\phi_k)_{x_i} (\phi_j)_{x_k}
= (\phi_j)_{x_i x_k} & & (1 \le i, j, k \le n: \text{distinct}), \quad \quad \label{eq: 1st eq} \\
\left( \phi_k^{-1} (\phi_j)_{x_k} \right)_{x_k} + \left( \phi_j^{-1} (\phi_k)_{x_j} \right)_{x_j}
=
- \displaystyle\sum_{i \ne j, k}
\phi_i^{-2} (\phi_k)_{x_i} (\phi_j)_{x_i} & & (1 \le j, k \le n: \text{distinct}). \quad \quad \label{eq: 2nd eq}
\end{eqnarray}

For fixed $\Phi({\mathbf x})$, the matrix $A_k = \left( a_{ij}^{(k)} \right)$ ($k = 1, \ldots, n$) that 
satisfy the above (a) and (b) are provided by:
\begin{eqnarray}\label{eq: definition of A_k}
A_k = \tr{ {\mathbf e}_k } {\mathbf c}_k - \tr{ {\mathbf c}_k } {\mathbf e}_k,
\end{eqnarray}
where ${\mathbf c}_k := \left( \phi_1^{-1} (\phi_k)_{x_1}, \ldots, \phi_n^{-1} (\phi_k)_{x_n} \right)$
and ${\mathbf e}_k$ is the unit vector with $1$ in the $k$'th entry.
$U({\mathbf x}) \in O(n)$ is obtained by solving $U_{x_k} = U A_k$ ($k = 1, \ldots, n$).

\end{proposition}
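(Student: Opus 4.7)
The plan splits into three steps: (i) derive the form (\ref{eq: definition of A_k}) of $A_k$ from the integrability of $f$, (ii) translate compatibility condition (b) into the PDEs (\ref{eq: 1st eq}) and (\ref{eq: 2nd eq}), and (iii) reverse the construction on the simply-connected ${\mathfrak D}$. Let $u_1,\ldots,u_n$ denote the columns of $U$, so that $Df=U\Phi$ reads $f_{x_j}=\phi_j u_j$ and $(u_j)_{x_k}=\sum_i a^{(k)}_{ij}u_i$.

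For step (i), I would impose $(f_{x_j})_{x_k}=(f_{x_k})_{x_j}$ and compare coefficients in the orthonormal frame $\{u_1,\ldots,u_n\}$. Using $a^{(k)}_{ii}=0$ from (a), the coefficient of $u_k$ gives $a^{(k)}_{kj}=\phi_j^{-1}(\phi_k)_{x_j}$ for $j\ne k$; the coefficient of $u_j$ produces the dual relation $a^{(j)}_{jk}=\phi_k^{-1}(\phi_j)_{x_k}$; and the coefficient of $u_i$ with $i,j,k$ pairwise distinct is the symmetric identity $\phi_j a^{(k)}_{ij}=\phi_k a^{(j)}_{ik}$, satisfied trivially by the canonical choice $a^{(k)}_{ij}=0$. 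These entries assemble exactly into $A_k = \tr{{\mathbf e}_k}{\mathbf c}_k - \tr{{\mathbf c}_k}{\mathbf e}_k$.

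For step (ii), I would substitute this explicit $A_k$ into (b) and exploit the fact that each $A_k$ has nonzero entries only in row $k$ and column $k$. The $(j,k)$-entry of $[A_j,A_k]=(A_j)_{x_k}-(A_k)_{x_j}$ has an internal sum restricted to $m\notin\{j,k\}$ and yields exactly (\ref{eq: 2nd eq}); the $(i,j)$-entry with $i,j,k$ pairwise distinct collapses, after clearing an overall $\phi_j^{-1}$, to (\ref{eq: 1st eq}); the remaining entries are either transposes of these (by skew-symmetry) or vanish trivially on both sides. The main obstacle is precisely this bookkeeping: one must verify that the two PDEs \emph{exhaust} condition (b) rather than merely follow from it, and the extreme sparsity of the $A_k$'s is what makes the matching clean.

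For step (iii) (the converse), one reverses the argument. Given positive $\phi_i\in C^2({\mathfrak D})$ solving (\ref{eq: 1st eq}) and (\ref{eq: 2nd eq}), define $A_k$ by (\ref{eq: definition of A_k}); condition (a) is immediate from the antisymmetric construction, and (b) follows by reading step (ii) in reverse. On the simply-connected ${\mathfrak D}$, Frobenius's theorem integrates $U_{x_k}=UA_k$ to a global solution, and skew-symmetry preserves orthogonality from an orthogonal initial value, since $(\tr{U}U)_{x_k}=O$. Finally, reading step (i) backwards shows that the $\RR^n$-valued 1-form $\sum_j \phi_j u_j\,dx_j$ is closed, so the Poincar\'e lemma produces a primitive $f\in C^3({\mathfrak D},\RR^n)$ satisfying $Df=U\Phi$.
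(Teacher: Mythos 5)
Your overall plan matches the paper's, but step (i) contains a genuine gap at the one place where the argument is not routine. When $f$ is given and you derive $A_k = U^{-1}U_{x_k}$ from the frame $U$, there is \emph{no freedom} to ``choose'' $a^{(k)}_{ij}=0$ for pairwise distinct $i,j,k$: the entries are determined by $f$. The exchange relation $\phi_j a^{(k)}_{ij}=\phi_k a^{(j)}_{ik}$ that you correctly extract is not sufficient by itself; what you must show is that these off-triangular entries necessarily vanish, and your proposal simply asserts this as a ``canonical choice.'' The paper closes this hole by chaining the exchange relation with the antisymmetry $a^{(k)}_{ij}=-a^{(k)}_{ji}$ three times around the cycle $(i,j,k)$:
\begin{equation*}
a^{(k)}_{ij} = \frac{\phi_k}{\phi_j}\,a^{(j)}_{ik} = -\frac{\phi_k}{\phi_j}\,a^{(j)}_{ki} = -\frac{\phi_k}{\phi_i}\,a^{(i)}_{kj} = \frac{\phi_k}{\phi_i}\,a^{(i)}_{jk} = a^{(k)}_{ji} = -a^{(k)}_{ij},
\end{equation*}
forcing $a^{(k)}_{ij}=0$. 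Without this, the subsequent substitution into (b) would carry extra unknowns and the PDEs (\ref{eq: 1st eq})--(\ref{eq: 2nd eq}) would not be the correct characterization. Once this is repaired, your steps (ii) and (iii) are sound and in line with the paper's computation; in fact the paper's written proof stops after the forward direction, so your step (iii), which runs the Frobenius and Poincar\'{e} machinery backwards on the simply-connected domain, supplies a converse that the paper leaves implicit.
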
 

\begin{proof} If such an $f$ exists, from $((f_i)_{x_j})_{x_k} = ((f_i)_{x_k})_{x_j}$, 
$\phi_1({\mathbf x}), \ldots, \phi_{n}({\mathbf x})$ and $U({\mathbf x})= \begin{pmatrix} {\mathbf u}_1({\mathbf x}) & \cdots & {\mathbf u}_n({\mathbf x}) \end{pmatrix}$ satisfy:
\begin{eqnarray*}
(\phi_j)_{x_k} {\mathbf u}_j
+ \phi_j \sum_{i \ne j} a_{ij}^{(k)} {\mathbf u}_i
&=& 
(\phi_k)_{x_j} {\mathbf u}_k
+ \phi_k \sum_{i \ne k} a_{ik}^{(j)} {\mathbf u}_i \quad (1 \le j, k \le n).
\end{eqnarray*}

Since ${\mathbf u}_1, \ldots, {\mathbf u}_n$ are linearly independent over $\RR$,
the following are obtained:
\begin{eqnarray*}
\phi_j a_{ij}^{(k)} = \phi_k a_{ik}^{(j)} & & (1 \le i, j, k \le n: \text{distinct}), \\
a_{kj}^{(k)} = \phi_j^{-1} (\phi_k)_{x_j} & & (1 \le j, k \le n: \text{distinct}).
\end{eqnarray*}

It is concluded that $a_{ij}^{(k)} = 0$ for any distinct $1 \le i, j, k \le n$,
owing to $a_{ij}^{(k)} = -a_{ji}^{(k)}$ and
$$
a_{ij}^{(k)}
= \frac{\phi_k}{\phi_j} a_{ik}^{(j)}
= - \frac{\phi_k}{\phi_j} a_{ki}^{(j)}
= - \frac{\phi_k}{\phi_i} a_{kj}^{(i)}
= \frac{\phi_k}{\phi_i} a_{jk}^{(i)}
= a_{ji}^{(k)}.
$$

Thus, $A_k = \tr{ {\mathbf e}_k } {\mathbf c}_k - \tr{ {\mathbf c}_k } {\mathbf e}_k$ is obtained.
Because $A_k$ also fulfills the above (b), 
\begin{eqnarray}
& & \hspace{-10mm}
(\tr{ {\mathbf e}_j } {\mathbf c}_j - \tr{ {\mathbf c}_j } {\mathbf e}_j)
(\tr{ {\mathbf e}_k } {\mathbf c}_k - \tr{ {\mathbf c}_k } {\mathbf e}_k)
-
(\tr{ {\mathbf e}_k } {\mathbf c}_k - \tr{ {\mathbf c}_k } {\mathbf e}_k)
(\tr{ {\mathbf e}_j } {\mathbf c}_j - \tr{ {\mathbf c}_j } {\mathbf e}_j) \nonumber \\
&=& 
\phi_k^{-1} (\phi_j)_{x_k} (\tr{ {\mathbf e}_j } {\mathbf c}_k - \tr{ {\mathbf c}_k } {\mathbf e}_j)
- ({\mathbf c}_j \cdot {\mathbf c}_k) ( \tr{ {\mathbf e}_j } {\mathbf e}_k - \tr{ {\mathbf e}_k } {\mathbf e}_j )
- \phi_j^{-1} (\phi_k)_{x_j} (\tr{ {\mathbf e}_k} {\mathbf c}_j - \tr{ {\mathbf c}_j } {\mathbf e}_k) \nonumber \\
&=& 
(\tr{ {\mathbf e}_j } {\mathbf c}_j - \tr{ {\mathbf c}_j } {\mathbf e}_j)_{x_k}
- (\tr{ {\mathbf e}_k } {\mathbf c}_k - \tr{ {\mathbf c}_k } {\mathbf e}_k)_{x_j}. \label{eq: result of (b)}
\end{eqnarray}

By comparing the $(j, i)$- and $(j, k)$-components of Eq.(\ref{eq: result of (b)}), we can obtain:
\begin{eqnarray*}
(\phi_i \phi_k)^{-1} (\phi_j)_{x_k} (\phi_k)_{x_i} 
=
\left( \phi_i^{-1} (\phi_j)_{x_i} \right)_{x_k} & & (1 \le i, j, k \le n: \text{distinct}), \\
\phi_j^{-2} (\phi_j)_{x_j} (\phi_k)_{x_j} 
+ \phi_k^{-2} (\phi_j)_{x_k} (\phi_k)_{x_k} 
- {\mathbf c}_j \cdot {\mathbf c}_k & & \\
= \left( \phi_k^{-1} (\phi_j)_{x_k} \right)_{x_k}
+ \left( \phi_j^{-1} (\phi_k)_{x_j} \right)_{x_j} & & (1 \le j, k \le n: \text{distinct}).
\end{eqnarray*}
Each equation leads to Eqs.(\ref{eq: 1st eq}), (\ref{eq: 2nd eq}), respectively. 
\qed
\end{proof} 

For any constants $t_1, \ldots, t_n \ne 0$, $f(x_1, \ldots, x_n)$ fulfills ($\star$) if and only if $f(t_1 x_1, \ldots, t_n x_n)$ does.
Accordingly, $\phi_j(x_1, \ldots, x_n)$ ($j = 1, \ldots, n$) fulfill Eqs.(\ref{eq: 1st eq}) and (\ref{eq: 2nd eq})
if and only if 
$
t_j \phi_j(t_1 x_1, \ldots, t_n x_n) \quad (j = 1, \ldots, n)
$ do.
Thus, the self-similar solutions of the PDEs are provided by those that fulfill $\phi_j(x_1, \ldots, x_n) = t_j \phi_j(t_1 x_1, \ldots, t_n x_n)$ for any $0 \ne t_1, \ldots, t_n \in \RR$,
which leads to the self-similar solution $\phi_j = c_j/x_j$ ($c_j \in \RR$).
In this case, ($\star\star$) does not hold clearly. 
The map $f$ is given by
$$
f(x_1, \ldots, x_n) = U_0 \begin{pmatrix} c_1 \log x_1 \\ \vdots \\ c_n \log x_n \end{pmatrix} + {\mathbf v}_0 \quad 
(U_0 \in O(n), {\mathbf v}_0 \in \RR^N).
$$

Next, suppose that $\phi_1({\mathbf x}) = \cdots = \phi_n({\mathbf x})$ \IE $f$ is a conformal map. 
In this case, the condition ($\star\star$) only provides trivial lattice packings,
because $\phi_1 = \cdots = \phi_n$ and $\phi_1 \cdots \phi_n = c$ imply
that $\Phi({\mathbf x})$ and $U({\mathbf x})$ are constant.


\begin{example}[Case of conformal mapping without the condition ($\star\star$)]
\label{exa: Doyle spiral}
We can put $\phi({\mathbf x}) := \phi_1({\mathbf x}) = \cdots = \phi_n({\mathbf x})$.
For $n \ge 3$, 
any conformal maps are M{\" o}bius transformations of the following form as a result of Liouville's theorem.
$$
	f(x) = \frac {\alpha U (x-a)}{|x-a|^{\epsilon}} +b. \quad (a, b \in \RR^n, \alpha \in \RR, U \in O(n), \epsilon = 0, 2)
$$
From Proposition~\ref{prop: PDE system}, the PDE for $n = 2$ is $( \phi_2^{-1} (\phi_1)_{y} )_{y} + ( \phi_1^{-1} (\phi_2)_{x} )_{x} = 0$. Hence  $\phi({\mathbf x}) := \phi_1({\mathbf x}) = \phi_2({\mathbf x})$ fulfills 
$$
(\log \phi)_{x x} + (\log \phi)_{y y} = 0.
$$
Thus, $u(x, y) := \log \phi(x, y)$ is a harmonic function. 
If $v(x, y)$ is harmonic conjugate to $u(x, y)$ (\IE $u_{x} = v_{y}$ and $u_{y} = -v_{x}$), 
the following is obtained from Eq.(\ref{eq: definition of A_k}). 
\begin{eqnarray*}
A_1 &=& \begin{pmatrix} 0 & \phi^{-1} \phi_{y} \\ -\phi^{-1} \phi_{y} & 0 \end{pmatrix} = \begin{pmatrix} 0 & u_{y} \\ -u_{y} & 0 \end{pmatrix}, \\
A_2 &=& \begin{pmatrix} 0 & -\phi^{-1} \phi_{x} \\ \phi^{-1} \phi_{x} & 0 \end{pmatrix} = \begin{pmatrix} 0 & -u_{x} \\ u_{x} & 0 \end{pmatrix}, \\
U(x, y)
&=& U_0 \exp \left( \begin{pmatrix} 0 & -v \\ v & 0 \end{pmatrix} \right)
= U_0 \begin{pmatrix} \cos v & -\sin v \\ \sin v & \cos v \end{pmatrix}, \quad U_0 \in O(2).
\end{eqnarray*}

The Doyle spiral is included in this case, in which
each circle is tangent to six other circles, and the seven circles have the radii with the ratio $1, a, b, 1/a, 1/b, a/b, b/a$ ($a, b > 0$) as in Figure~\ref{fig: Doyle spiral 2}.
\begin{figure}[htbp]
\scalebox{0.4}{\includegraphics{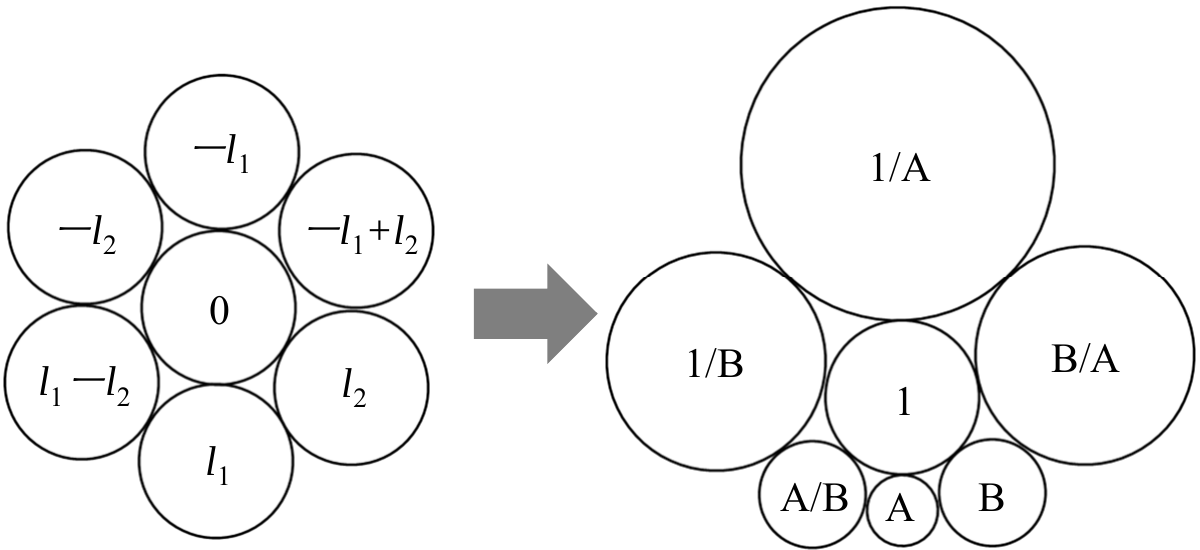}}
\caption{{\bf Left}: hexagonal lattice packing in $\RR^2$, {\bf Right}: radius ratios of adjacent circles in the Doyle spiral}
\label{fig: Doyle spiral 2} 
\end{figure}

\end{example}

The Doyle spirals are the image of a hexagonal lattice in the complex plane $\CC$ by exponential maps~\cite{Beardon94}.
In \cite{Bobenko2001}, \textit{conformally symmetric circle packings} were defined as a generalization of the Doyle spirals.
From the aspect of the golden angle method, packings with the Voronoi cells of varying volumes have been investigated in \cite{Sushida2017}, \cite{Yamagishi2017}.
In order to generate such circle packings, the determination of the radius of each circle is also necessary, which is not discussed in this article.

\begin{example}[PDE for $n = 2$]\label{exa: case of n=2} 
From $( \phi_2^{-1} (\phi_1)_{y} )_{y} + ( \phi_1^{-1} (\phi_2)_{x} )_{x} = 0$,
if ($\star\star$) is also assumed, 
$\epsilon(x, y) := c^{-1} \phi_2^2$, $\phi_1 \phi_2 = c$ 
satisfies the following equations:
\begin{eqnarray*}
\epsilon_{x x} + (\epsilon^{-1})_{y y} &=& c^{-1} ( (\phi_2^2)_{x x} + (\phi_1^2)_{y y} ) = 0, \\
A_1 = U^{-1} U_{x} &=& \begin{pmatrix} 0 & \phi_2^{-1} (\phi_1)_{y} \\ -\phi_2^{-1} (\phi_1)_{y} & 0 \end{pmatrix} = \begin{pmatrix} 0 & (\epsilon^{-1})_{y}/2 \\ -(\epsilon^{-1})_{y}/2 & 0 \end{pmatrix}, \\
A_2 = U^{-1} U_{y} &=& \begin{pmatrix} 0 & -\phi_1^{-1} (\phi_2)_{x} \\ \phi_1^{-1} (\phi_2)_{x} & 0 \end{pmatrix} = \begin{pmatrix} 0 & -\epsilon_{x}/2 \\ \epsilon_{x}/2 & 0 \end{pmatrix}.
\end{eqnarray*}

$U(x, y)$ is explicitly represented by using $\theta(x, y)$ with $\epsilon_{x} = 2\theta_{y}$ and $(\epsilon^{-1})_{y} = -2\theta_{x}$ as follows: 
$$
U(x, y)
= U_0 \exp \left( \begin{pmatrix} 0 & -\theta \\ \theta & 0 \end{pmatrix} \right)
= U_0 \begin{pmatrix} \cos \theta & -\sin \theta \\ \sin \theta & \cos \theta \end{pmatrix} \quad (U_0 \in O(2)).
$$

All the solutions of inviscid Burgers equation $\epsilon \epsilon_{x} + \epsilon_{y} = 0$
fulfill $\epsilon_{x x} + (\epsilon^{-1})_{y y} = 0$, which is also seen from the following decomposition:
\begin{eqnarray*}
\epsilon_{x x} + (\epsilon^{-1})_{y y}
=
\left( \frac{\partial}{\partial x} - \frac{\partial}{\partial y} \epsilon^{-1} \right) \left( \frac{\partial}{\partial x} + \epsilon^{-1} \frac{\partial}{\partial y} \right) \epsilon.
\end{eqnarray*}

As a result, non-trivial solutions of Eqs.(\ref{eq: 1st eq}),(\ref{eq: 2nd eq}) for dimensions $n > 2$ are obtained
by setting 
$\phi_{2k-1}^2({\mathbf x}) = c_k / \epsilon_k(x_{2k-1}, x_{2k})$, $\phi_{2k}^2({\mathbf x}) = c_k \epsilon_k(x_{2k-1}, x_{2k})$,
(and $\phi_n({\mathbf x}) = c_{(n+1)/2}$ if $n$ is odd)
for some constants $c_k$ and solutions of $\epsilon_k (\epsilon_k)_{x} + (\epsilon_k)_{y} = 0$.
\end{example}

\begin{example}[Case of inviscid Burgers equation $\epsilon_{x x} + (\epsilon^{-1})_{y y} = 0$] 
\label{exa: Case of inviscid Burgers equation}
Let $\epsilon(x, y)$ be the solution of the following inviscid Burgers equation:
\begin{eqnarray}
\begin{cases}
\epsilon \epsilon_x + \epsilon_y = 0, \\
\epsilon(t, 0) = h(t) \text{ for any } t \in I,
\end{cases}
\end{eqnarray}
where 
$h(x): I \rightarrow \RR$ is the initial condition given on an interval $I \subset \RR$.

The map $f(x, y)$ with the following Jacobian matrix, can be determined as follows:
\begin{eqnarray}
\begin{pmatrix}
(f_1)_x & (f_1)_y \\
(f_2)_x & (f_2)_y \\
\end{pmatrix}
= 
\begin{pmatrix}
\cos \theta(x, y) & -\sin \theta(x, y) \\
\sin \theta(x, y) & \cos \theta(x, y) \\
\end{pmatrix}
\begin{pmatrix}
\epsilon^{-1/2} & 0 \\
0 & \epsilon^{1/2} \\
\end{pmatrix}.
\end{eqnarray}

From $\epsilon \epsilon_x + \epsilon_y = 0$, 
\begin{eqnarray*}
2 \theta_x &=& -(\epsilon^{-1})_y = -\epsilon^{-1} \epsilon_x, \\
2 \theta_y &=& \epsilon_x = -\epsilon^{-1} \epsilon_y.
\end{eqnarray*}
Thus, $\theta = -(\log \epsilon + d)/2$ for some constant $d$.
The inviscid Burgers equation can be solved by using the characteristic equation:
\begin{eqnarray}\label{eq: characteristic equation}
\begin{cases}
q'(s) = \epsilon(q(s), s), \\
q(0) = t. 
\end{cases}
\end{eqnarray}

From Eq.(\ref{eq: characteristic equation}),
$q'(0) = \epsilon(t, 0) = h(t)$.
On the characteristic curve, $q^\prime(s) = \epsilon(q(s), s)$ is constant because 
$$
\frac{d}{ds} \epsilon(q(s), s) = q'(s) \epsilon_x(q(s), s) + \epsilon_y(q(s), s) = \epsilon (q(s), s) \epsilon_x(q(s), s) + \epsilon_y(q(s), s) = 0.
$$

Hence, $q(s) = t + h(t) s$ ($s \in \RR$) is the characteristic line,
on which $f = (f_1, f_2)$ satisfies:
\begin{eqnarray*}
\frac{d}{ds} f_1(q(s), s)
&=& q'(s) (f_1)_{x_1} + (f_1)_{x_2}
= \epsilon^{1/2} \left( \cos \frac{\log \epsilon + d}{2} + \sin \frac{\log \epsilon + d}{2} \right), \\
\frac{d}{dt} f_2(q(s), s)
&=& q'(s) (f_2)_{x_1} + (f_2)_{x_2} = \epsilon^{1/2} \left( \cos \frac{\log \epsilon + d}{2} - \sin \frac{\log \epsilon + d}{2} \right).
\end{eqnarray*}
Without loss of generality, $d = -\pi/2$ may be assumed.
In this case, 
for any $(x, y) \in \RR^2$ and $t \in I$ that satisfies $t + h(t) y = x$, $f = (f_1, f_2)$ is given by
\begin{eqnarray*}
f(x, y)
&=& 
f(t, 0)
+ y \sqrt{ 2 h(t) } 
\begin{pmatrix}
\sin \frac{\log h(t)}{2} \\
\cos \frac{\log h(t)}{2}
\end{pmatrix}, \\
f(t, 0)
&=& 
\begin{pmatrix}
\int (f_1)_{x_1} (t, 0) dt \\
\int (f_2)_{x_1}(t, 0) dt
\end{pmatrix}
=
\begin{pmatrix}
\int \frac{ 1 }{ \sqrt{ h(t)} } \cos \frac{\log h(t) - \pi/2}{2} dt \\
- \int \frac{ 1 }{ \sqrt{ h(t)} } \sin \frac{\log h(t) - \pi/2}{2} dt
\end{pmatrix}.
\end{eqnarray*}
In particular, if $f_1(t, 0) \cos \frac{\log h(t)}{2} \ne f_2(t, 0) \sin \frac{\log h(t)}{2}$,
\begin{eqnarray*}
f(x, y)
&=& (f_1(t, 0) \cos \frac{\log h(t)}{2} - f_2(t, 0) \sin \frac{\log h(t)}{2})
\begin{pmatrix}
Y & 1 \\
-1 & Y \\
\end{pmatrix}
\begin{pmatrix}
\sin \frac{\log h(t)}{2} \\
\cos \frac{\log h(t)}{2}
\end{pmatrix}, \\
Y &=& \frac{ y \sqrt{2 h(t)} + f_1(t, 0) \sin \frac{\log h(t)}{2} + f_2(t, 0) \cos \frac{\log h(t)}{2} }{ f_1(t, 0) \cos \frac{\log h(t)}{2} - f_2(t, 0) \sin \frac{\log h(t)}{2} }. 
\end{eqnarray*}
Otherwise, by using $g(t) := f_1(t, 0) / \sin \frac{\log h(t)}{2} = f_2(t, 0) / \cos \frac{\log h(t)}{2}$,
\begin{eqnarray*}
f(x, y)
&=& 
Y
\begin{pmatrix}
\sin \frac{\log h(t)}{2} \\
\cos \frac{\log h(t)}{2}
\end{pmatrix}, \quad
Y = y \sqrt{ 2 h(t) } + g(t).
\end{eqnarray*}
Both correspond to the case of Eq.(\ref{eq: f with self similarity}), 
if $f$ is regarded as the function of $(t, Y)$.
\end{example}


\subsection{A family of solutions obtained by separation of variables}
\label{subsec: Solutions by the method of separation of variables}

Another family of solutions of $\epsilon_{x x} + (\epsilon^{-1})_{y y} = 0$ can be obtained by separation of variables.
If we put $\epsilon(x, y) = F(x) / G(y)$,
$F''(x) / G(y) = - G''(y) / F(x)$ is obtained from
$\epsilon_{x x} = - (\epsilon^{-1})_{y y}$.
Hence, for some constant $\alpha$,
$$
F(x) F''(x) = \alpha, \quad G(y) G''(y) = -\alpha.
$$
If $\alpha \ne 0$, $(F'(x))^2 = 2 \alpha \log F(x) + d_1$, $(G'(y))^2 = -2 \alpha \log G(y) + d_2$.
The solutions $F(x)$ and $G(y)$ are functions represented by incomplete gamma functions in general.
Only the special case of $\alpha = 0$ is discussed below to obtain 2D and 3D analogues of the Vogel spiral.

If $\alpha = 0$, we have $F(x) = c_1 x + d_1$ and $G(y) = c_2 y + d_2$. By translating the $x$ and $y$-coordinates, 
it may be assumed that $\epsilon(x, y) = F(x)/G(y)$ is equal to either of
(a) $\epsilon = 1/\beta y$, 
(b) $\epsilon = \beta x$,
(c) $\epsilon = \beta x/y$. 
In case of (a)--(c), 
$\epsilon$ satisfies $\epsilon \epsilon_x + \epsilon_y = 0$ if and only if (c) and $\beta = 1$.

The following examples explains each case. 
The case (b) may be omitted, because it can be obtained from (a) by exchanging $x$ and $y$, and $f_1$ and $f_2$. 
The case (a) contains the Vogel spiral as a special case.
\begin{example}[(a) $\epsilon(x, y) = 1/\beta y$: packing of a disk]
\label{exa: case of (a) and (b)}
In this example, $\varphi$ is always set to $1/(1 + \gamma_1) = (3 - \sqrt{5})/2$.
Without loss of generality, $\beta > 0$ may be assumed. 

From $\epsilon(x, y) = 1/\beta y$, the map $f$ is as follows, up to similarity transformations.
$$
f(x, y) = \sqrt{y} \begin{pmatrix} \cos (\beta x/2) \\ \sin (\beta x/2) \end{pmatrix},
$$
which is injective on the following ${\mathfrak D}$:
$$
{\mathfrak D} := \left\{ (x, y) \in \RR^2: 0 \le x < 4 \pi/\beta,\ 0 < y < M \right\}.
$$

The following
are considered as the basis matrix $B$ of $L$.
$$
\text{(i) }
\begin{pmatrix}
1 & -\varphi \\ 0 & -1
\end{pmatrix}, \quad
\text{(ii) }
\begin{pmatrix}
1 & -\varphi \\
1 & -\overline{\varphi} \\
\end{pmatrix}, \quad
\text{(iii) }
\begin{pmatrix}
0 & -1 \\
1 & -\overline{\varphi} \\
\end{pmatrix},
$$
Although it was proved in Theorem~\ref{thm: theorem 1} that the lattice basis in (ii) is optimal with respect to the stability of the packing density,
the mapped lattice $L$ also needs to contain a vector very close to $\tr{ (4 \pi / \beta, 0) }$
in order to smoothly connect the spiral patterns near the half-lines $x = 0$ and $x = 4 \pi/\beta$ (see Figure~\ref{fig: circle packings obtained from different lattice bases}).

\begin{enumerate}[(i)]
\item If $s := 4 \pi / \beta$ is an integer, then $\tr{ (s, 0) }$ is precisely contained in $L(B)$ in this case. 
If $s = 1$, $f(L(B) \cap {\mathfrak D})$ is the same as the Vogel spiral. 
$$
f(L(B) \cap {\mathfrak D}) := \left\{ \sqrt{n} (\cos (2 \pi n \varphi /s), \sin (2 \pi n \varphi/s)): n \in \ZZ,\ 0 < n < M \right\}.
$$
However, as shown in (i) of Figure~\ref{fig: circle packings obtained from different lattice bases}, 
the points $f(L(B) \cap {\mathfrak D})$ are not well distributed around the origin for large $s$,
because $\epsilon = s / 4 \pi y$ is not sufficiently small (\CF (2) of Theorem~\ref{thm: theorem 1}).
The non-uniformity can be avoided by using the basis (ii) instead. 

\item In this case, $L(B)$ does not contain any vectors of the form $\tr{ (s, 0) }$.
However, the $n$'th convergent $p_n^{(-)}/q_n^{(-)}$ of $-1/\overline{\varphi}$ fulfills:
\begin{eqnarray*}
\begin{pmatrix}
1 & -\varphi \\
1& -\overline{ \varphi }
\end{pmatrix}
\begin{pmatrix}
q_n^{(-)} \\
-p_n^{(-)} 
\end{pmatrix} 
&=& 
\begin{pmatrix}
q_n^{(-)} + p_n^{(-)} \varphi \\
q_n^{(-)} + p_n^{(-)} \overline{\varphi} \\
\end{pmatrix} \\
&=& 
\begin{pmatrix}
2 q_n^{(-)} + p_n^{(-)} (\varphi + \overline{\varphi}) \\
0
\end{pmatrix}
+ q_n^{(-)} \overline{ \varphi } \left( \overline{ \varphi }^{-1} + \frac{ p_n^{(-)} }{ q_n^{(-)} } \right)
\begin{pmatrix}
-1 \\
1
\end{pmatrix}.
\end{eqnarray*}

Therefore, by setting $s$ to one of $\abs{ 2 q_n^{(-)} + p_n^{(-)} (\varphi + \overline{\varphi}) }$ ($n \ge 0$), 
it is possible to glue the spiral patterns near the boundary of ${\mathfrak D}$ apparently smoothly, as seen in (ii) of Figure~\ref{fig: circle packings obtained from different lattice bases}.
This technique of setting $s$ to a special value will be also used in the other examples.
For the given parameters, the discontinuity in spirals could be completely removed by violating the assumptions ($\star$) and ($\star\star$) very slightly.

\item As in case (ii), although $L(B)$ does not contain any vectors $\tr{ (s, 0) }$, the boundary problem can be avoided,
by putting $s = \abs{ p_n^{(-)} }$, because 
\begin{eqnarray*}
\begin{pmatrix}
0 & -1 \\
1& -\overline{ \varphi }
\end{pmatrix}
\begin{pmatrix}
q_n^{(-)} \\
-p_n^{(-)} 
\end{pmatrix} 
&=& 
\begin{pmatrix}
p_n^{(-)} \\
q_n^{(-)} + p_n^{(-)} \overline{\varphi} \\
\end{pmatrix} 
\approx
\begin{pmatrix}
p_n^{(-)} \\
0
\end{pmatrix}.
\end{eqnarray*}
The packing becomes sparse at the coordinates farther from the origin, as seen in (iii) of Figure~\ref{fig: circle packings obtained from different lattice bases}.
The number of spines is equal to the parameter $s = \abs{ p_n^{(-)} }$.
\end{enumerate}
\begin{figure}[htbp]
\scalebox{0.45}{\includegraphics{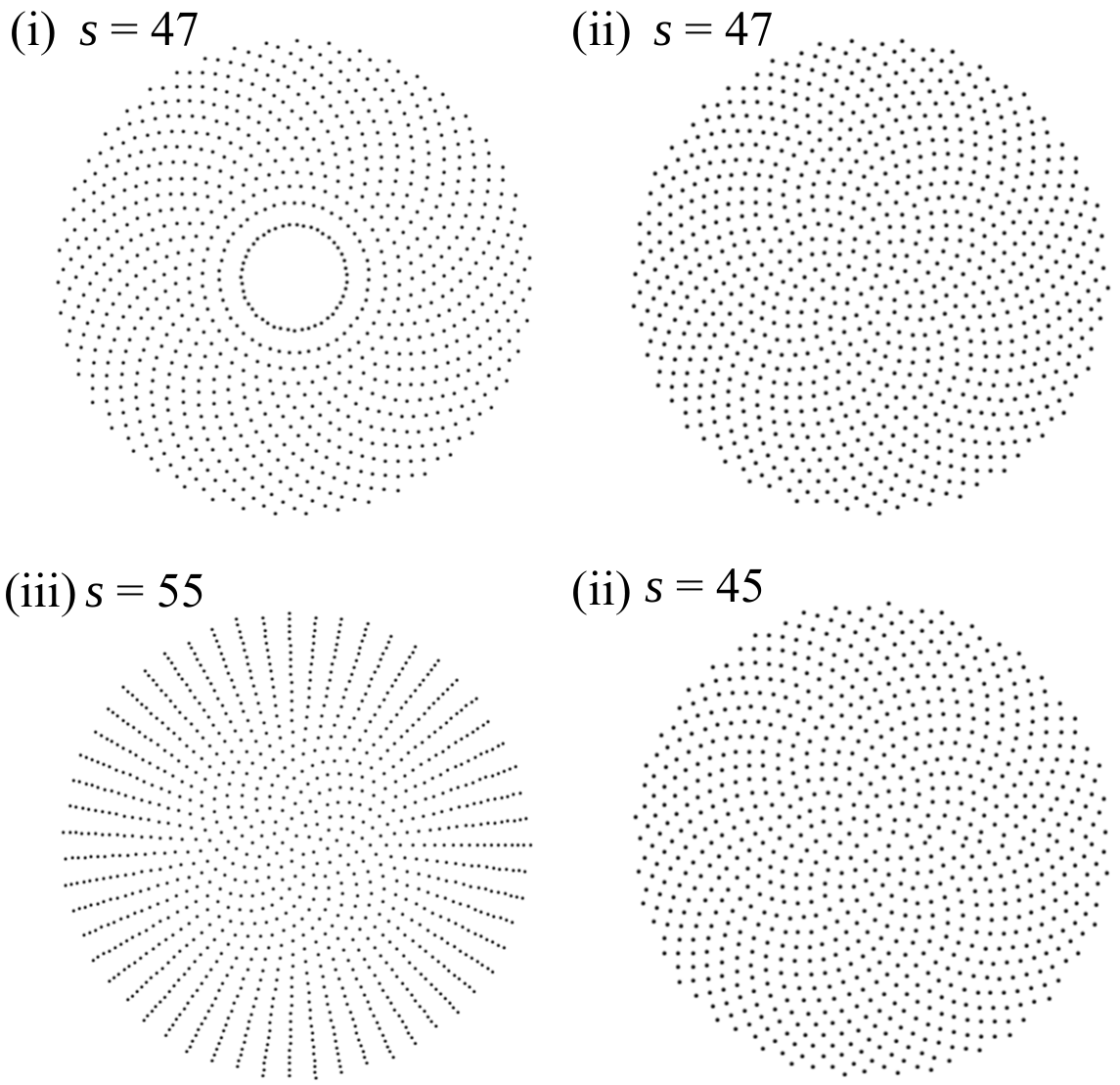}}
\caption{Packings in a disk obtained from the lattice bases (i)--(iii) of Example~\ref{exa: case of (a) and (b)}.
The parameter $s$ is set to
(i), (ii) $s = 2 q_9^{(-)} + (\varphi + \overline{\varphi}) p_9^{(-)} = 47$, where $p_9^{(-)} = -21$, $q_9^{(-)} = 55$ are the ninth convergent of $-1/\overline{\varphi} = (-3+\sqrt{5})/2$, and (iii) $s = -p_{11}^{(-)} = 55$.
As for (ii), the case of $s = 45 \ne 2 q_n^{(-)} + (\varphi + \overline{\varphi}) p_n^{(-)}$ ($n \in \ZZ_{>0}$) is also presented.
The arrow indicates that the spirals are not smoothly connected around the $x>0$ part of the $x$-axis.
} 
\label{fig: circle packings obtained from different lattice bases}
\end{figure}
\end{example}

It is known that the parastichies in the Vogel spiral are the Fermat spiral.
In fact, the image of $(x, y)$ by the following $f$ has the polar coordinate $(r, \theta) = (\sqrt{y}, \beta x/2)$.
$$
f(x, y) = \sqrt{y} \begin{pmatrix} \cos (\beta x/2) \\ \sin (\beta x/2) \end{pmatrix}.
$$

If $x, y$ is colinear, $(r, \theta) = (\sqrt{y}, \beta x/2)$ satisfies a linear equation $r^2 = a + b \theta$ for some constants $a$ and $b$,
which is an equation of the Fermat spiral.

In the following case of (c), the image of any line $y = a x$ passing through the origin by the map $f$ in Eq.(\ref{eq: f in case of (c)}), is a logarithmic spiral, because $\log r$ and $\theta$ fulfill a linear equation.
\begin{example}[(c) $\epsilon(x, y) = \beta x /y$, case of logarithmic spirals]
\label{exa: i-c solutions} 
The map $f$ is as follows, up to similarity transformations:
\begin{eqnarray}\label{eq: f in case of (c)}
f(x, y) =
\sqrt{x y} 
\begin{pmatrix}
\cos \theta(x, y) \\
\sin \theta(x, y) \\
\end{pmatrix}, \quad
\theta(x, y) = - \displaystyle\frac{\beta^{-1}}{2} \log \abs{ x } + \frac{\beta}{2} \log \abs{ y }.
\end{eqnarray}

The map $f$ is injective on the ${\mathfrak D}$:
\begin{eqnarray}\label{eq: f in case of (c) 2}
{\mathfrak D} := \left\{ (x, y) \in \RR^2: 0 < \log x \le \frac{4 \pi}{\beta + \beta^{-1}},\ 0 < y \le M \right\}.
\end{eqnarray}


As in the previous example, it is necessary to set $s := \exp( 4 \pi/(\beta + \beta^{-1}) )$ to a positive integer $s = 2 q_n^{(-)} + (\varphi + \overline{\varphi}) p_n^{(-)}$.
Since $X^2 - (4 \pi / \log s) X + 1 = 0$ has a real root $\beta$,
$1 \le s \le e^{2 \pi} \approx 535.5$ is also required.
\begin{figure}[htbp]
\scalebox{0.48}{\includegraphics{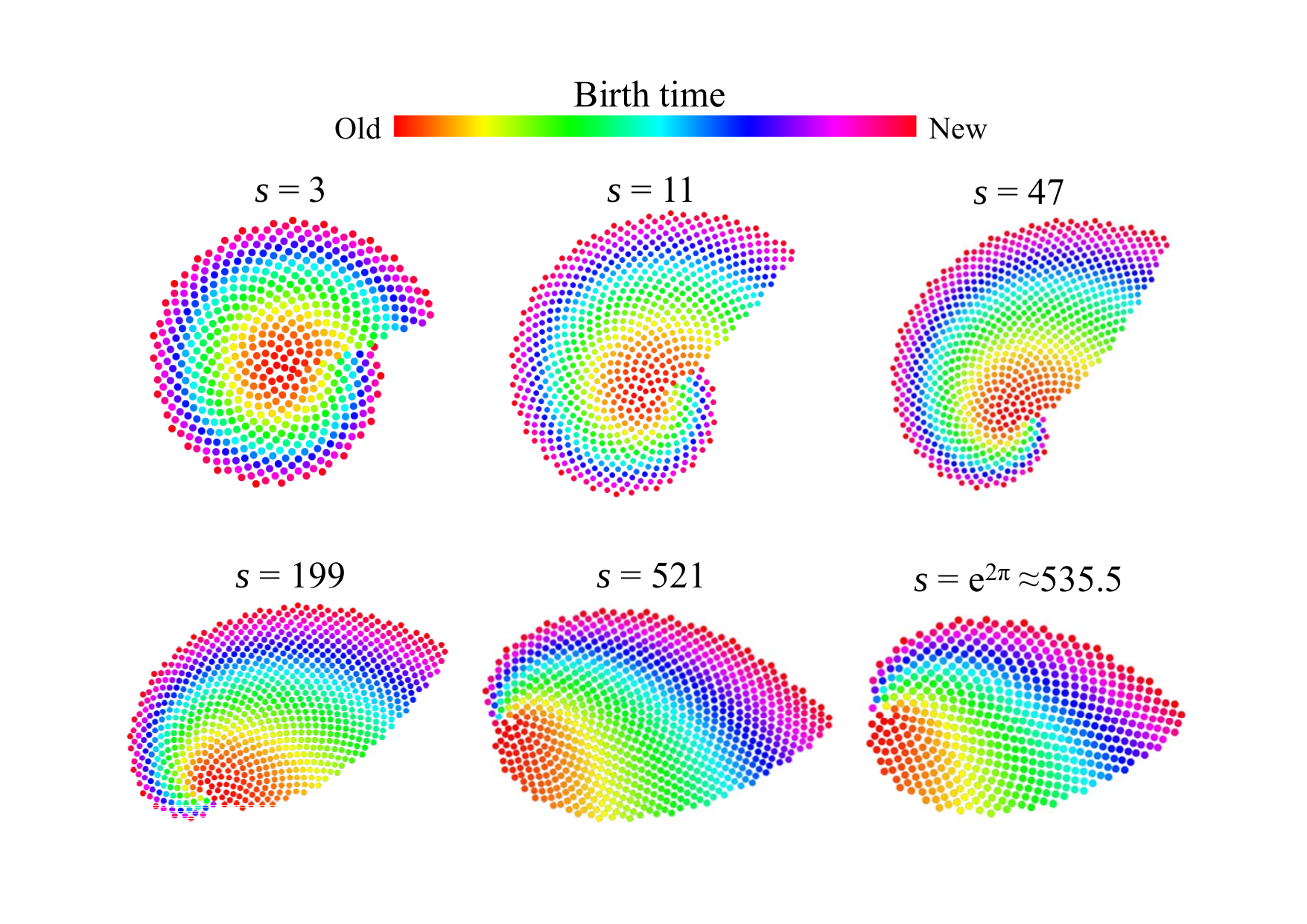}}
\caption{Packing of planes with logarithmic spirals. 
Each point is colored according to the $y$-value (birth time) of its preimage (\CF Eq.(\ref{eq: f in case of (c) 2})). The points with the same $y$-value form the identical shape, regardless of $0 < y \le M$. This self-similarity explains their biological shapes. The last $s = e^{2\pi}$ is also the case of an inviscid Burgers solution.
}
\label{fig: i-c solutions} 
\end{figure}
\end{example}

Proposition \ref{prop: solutions when A_3 = O} is a case of dimension $n=3$.
\begin{proposition}\label{prop: solutions when A_3 = O}
If $n = 3$ and $A_3 = 0$, the solutions $\phi_1({\mathbf x}), \phi_2({\mathbf x}), \phi_3({\mathbf x})$ for the system of PDEs in Eqs.(\ref{eq: 1st eq}),~(\ref{eq: 2nd eq}) and ($\star\star$) are given by 
\begin{eqnarray*}
\phi_1({\mathbf x})
&=&
c^{1/3} (d^3 + 3 d_2 x_3)^{1/3} \epsilon^{-1/2}(x_1, x_2), \\
\phi_2({\mathbf x})
&=&
c^{1/3} (d^3 + 3 d_2 x_3)^{1/3} \epsilon^{1/2}(x_1, x_2), \\
\phi_3({\mathbf x})
&=&
c^{1/3} (d^3 + 3 d_2 x_3)^{-2/3}
\end{eqnarray*}
for some constants $d, d_2$ and $\epsilon(x_1, x_2)$ with $\epsilon_{x_1 x_1} + (\epsilon^{-1})_{x_2 x_2} = - 2 d_2^2$.

\end{proposition}

\begin{proof}
From $A_3 = U^{-1} U_{x_3} = O$, $U$ is independent of $x_3$.
From (\ref{eq: definition of A_k}), $(\phi_3)_{x_1} = (\phi_3)_{x_2} = 0$ also holds.
Eq.(\ref{eq: 2nd eq}) implies that 
$\left( \phi_3^{-1} (\phi_1)_{x_3} \right)_{x_3}
= \left( \phi_3^{-1} (\phi_2)_{x_3} \right)_{x_3} = 0$.
Thus, if we put $G(x_3) := \int_0^{x_3} \phi_3(x_1, x_2, x) d x$,
the equation 
$$
\phi_i({\mathbf x}) = F_i(x_1, x_2) G(x_3) + H_i(x_1, x_2) \quad (i = 1, 2).
$$ 
holds for some $F_i(x_1, x_2)$ and $H_i(x_1, x_2)$.

From $\phi_1 \phi_2 \phi_3 = c$, $(\phi_1 \phi_2)_{x_j} = 0$ holds for $j = 1, 2$.
Thus, for some $\alpha_1, \alpha_2, \beta_1, \beta_2 \in \RR$ and $\tilde{F}(x_1, x_2)$,
\begin{eqnarray*}
	\phi_1({\mathbf x}) &=& (\alpha_1 G(x_3) + \beta_1) \tilde{F}(x_1, x_2), \\
	\phi_2({\mathbf x}) &=& (\alpha_2 G(x_3) + \beta_2)/\tilde{F}(x_1, x_2).
\end{eqnarray*}
The following equations are obtained from (\ref{eq: 1st eq}) and (\ref{eq: 2nd eq}). 
\begin{eqnarray}
(\phi_1)_{x_2 x_3}
&=&
\phi_2^{-1} (\phi_2)_{x_3} (\phi_1)_{x_2}, 
\\
(\phi_2)_{x_1 x_3}
&=&
\phi_1^{-1} (\phi_1)_{x_3} (\phi_2)_{x_1}, 
\\
\left( \phi_1^{-1} (\phi_2)_{x_1} \right)_{x_1} + \left( \phi_2^{-1} (\phi_1)_{x_2} \right)_{x_2}
&=&
- 
\phi_3^{-2} (\phi_1)_{x_3} (\phi_2)_{x_3}. \label{eq: third eq in example 2}
\end{eqnarray}

The first two equalities imply that
$\left( \log (\phi_1)_{x_2} \right)_{x_3}
=
(\log \phi_2)_{x_3}$ and
$\left( \log (\phi_2)_{x_1} \right)_{x_3}
=
(\log \phi_1)_{x_3}$.
Thus, the following functions are independent of $x_3$. 
$$
\phi_2^{-1} (\phi_1)_{x_2}
= \displaystyle\frac{1}{2}\frac{\alpha_1 G(x_3) + \beta_1}{\alpha_2 G(x_3) + \beta_2} (\tilde{F}^2)_{x_2}, \quad
\phi_1^{-1} (\phi_2)_{x_1}
= \displaystyle\frac{1}{2}\frac{\alpha_2 G(x_3) + \beta_2}{\alpha_1 G(x_3) + \beta_1} (\tilde{F}^{-2})_{x_1}.
$$

Hence, it is possible to choose 
$r_1, r_2 \ne 0$, $d, d_2 \in \RR$ and $\tilde{F}(x, y)$ so that $\phi_1 = r_1 (d + d_2 G(x_3)) \tilde{F}(x_1, x_2)$ and $\phi_2 = r_2 (d + d_2 G(x_3)) / \tilde{F}(x_1, x_2)$.
The following equations are obtained from $\phi_1 \phi_2 \phi_3 = r_1 r_2 (d + d_2 G(x_3))^2 \phi_3(x_3) = c$, $\phi_3(x_3) = G^\prime(x_3)$, $G(0) = 0$ and Eq.(\ref{eq: third eq in example 2}):
\begin{eqnarray*}
r_1 r_2 (d + d_2 G(x_3))^3 &=& r_1 r_2 d^3 + 3 c d_2 x_3, \\ 
\frac{r_2}{2 r_1} (\tilde{F}^{-2})_{x_1 x_1}
+ \frac{r_1}{2 r_2} (\tilde{F}^2)_{x_2 x_2}
&=&
- r_1 r_2 d_2^2.
\end{eqnarray*}

By putting $\epsilon(x_1, x_2) := (r_2/r_1) \tilde{F}(x_1, x_2)^{-2}$, 
$\epsilon_{x_1 x_1} + (\epsilon^{-1})_{x_2 x_2} =
- 2 (r_1 r_2) d_2^2$ is obtained in addition to the following.
\begin{eqnarray*}
\phi_1({\mathbf x}) 
&=&
(r_1 r_2)^{1/2} (d^3 + 3 (c d_2/r_1 r_2) x_3)^{1/3} \epsilon^{-1/2}(x_1, x_2), \\
\phi_2({\mathbf x}) 
&=&
(r_1 r_2)^{1/2} (d^3 + 3 (c d_2/r_1 r_2) x_3)^{1/3} \epsilon^{1/2}(x_1, x_2), \\
\phi_3({\mathbf x}) 
&=&
c (r_1 r_2)^{-1} (d + d_2 G(x_3))^{-2}
=
c (r_1 r_2)^{-1} (d^3 + 3 (c d_2/r_1 r_2) x_3)^{-2/3}.
\end{eqnarray*}

The statement is proved if $d, d_2$ are replaced by $c^{1/3} (r_1 r_2)^{-1/2} d$, $(r_1 r_2)^{-1/2} d_2$.
\qed
\end{proof}

Separation of variables can also be used to obtain
a family of solutions of $\epsilon_{x_1 x_1} + (\epsilon^{-1})_{x_2 x_2} = - 2 d_2^2$ for $d_2 \ne 0$.
A packing of a ball (\IE 3D analogue of the Vogel spiral), is obtained as a result;
if we put $\epsilon(x_1, x_2) = F(x_1) / G(x_2)$, 
$\epsilon_{x_1 x_1} + (\epsilon^{-1})_{x_2 x_2} + 2d_2^2 = 0$ implies:
$$
F(x_1) F''(x_1) + G(x_2) G''(x_2) + 2 d_2^2 F(x_1) G(x_2) = 0.
$$

Hence $F(x_1)$ or $G(x_2)$ must be a constant function, and either of 
(a) $\epsilon = d_0 + d_1 x_1 - (d_2 x_1)^2$ or (b) $\epsilon = 1/\{ d_0 + d_1 x_2 - (d_2 x_2)^2 \}$ holds.
As the part (b) can be obtained by swapping the roles of $x_1$ and $x_2$, and $f_1$ and $f_2$ in (a), we will discuss only the case (a) below.
\begin{example}[Packing of a ball, 3D Vogel spiral]
\label{exa: 3D Vogel spiral}
From $\epsilon = d_0 + d_1 x_1 - (d_2 x_1)^2$, $d_2 \ne 0$ and $A_k = \tr{ {\mathbf e}_k } {\mathbf c}_k - \tr{ {\mathbf c}_k } {\mathbf e}_k$,
$$
A_1 
=
\begin{pmatrix}
0 & 0 & d_2 \epsilon^{-1/2} \\
0 & 0 & 0 \\
-d_2 \epsilon^{-1/2} & 0 & 0 \\
\end{pmatrix}, \quad
A_2 
=
\begin{pmatrix}
0 & -\epsilon_{x_1} /2 & 0 \\
\epsilon_{x_1} /2 & 0 & d_2 \epsilon^{1/2} \\
0 & -d_2 \epsilon^{1/2} & 0 \\
\end{pmatrix}, \quad A_3 = O.
$$

The following $D_j, V_j$ ($j = 1, 2$) provide diagonalizations $A_1 = V_1 D_1 V_1^*$, $A_2 = V_2 D_2 V_2^*$ of $A_1$ and $A_2$.
\begin{eqnarray*}
D_1 
&=& d_2 \epsilon^{-1/2}
\begin{pmatrix}
i & 0 & 0 \\
0 & -i & 0 \\
0 & 0 & 0
\end{pmatrix}, \quad
V_1
= \frac{1}{ \sqrt{2} }
\begin{pmatrix}
-i & i & 0 \\
0 & 0 & \sqrt{2} \\
1 & 1 & 0 \\
\end{pmatrix}, \\
D_2 
&=& \sqrt{ d_1^2/4 + d_0 d_2^2 }
\begin{pmatrix}
i & 0 & 0 \\
0 & -i & 0 \\
0 & 0 & 0
\end{pmatrix}, \\
V_2
&=& \frac{1}{ \sqrt{2 (d_1^2/4 + d_0 d_2^2) } }
\begin{pmatrix}
\epsilon_{x_1}/2 & \epsilon_{x_1}/2 & -\sqrt{2} d_2 \epsilon^{1/2} \\
-i \sqrt{ d_1^2/4 + d_0 d_2^2 } & i \sqrt{ d_1^2/4 + d_0 d_2^2 } & 0 \\
d_2 \epsilon^{1/2} & d_2 \epsilon^{1/2} & \epsilon_{x_1}/\sqrt{2} \\
\end{pmatrix}.
\end{eqnarray*}

From $(V_j)_{x_j} = O$ 
and $U_{x_j} = U A_j$ ($j = 1, 2$), $U({\mathbf x})$ satisfies
$(U V_j)_{x_j} = U V_j D_j$. Hence, for some $U_0 \in O(3)$,
\begin{small}
\begin{eqnarray*}
U({\mathbf x})
= \frac{1}{ \sqrt{d_1^2/4 + d_0 d_2^2} } 
U_0
\begin{pmatrix}
1 & 0 & 0 \\
0 & \cos \sqrt{d_1^2/4 + d_0 d_2^2} x_2 & \sin \sqrt{d_1^2/4 + d_0 d_2^2} x_2 \\
0 & -\sin \sqrt{d_1^2/4 + d_0 d_2^2} x_2 & \cos \sqrt{d_1^2/4 + d_0 d_2^2} x_2 \\
\end{pmatrix}
\begin{pmatrix}
d_2 \epsilon^{1/2} & 0 & -\epsilon_{x_1} /2 \\
0 & \sqrt{d_1^2/4 + d_0 d_2^2} & 0 \\
\epsilon_{x_1} /2 & 0 & d_2 \epsilon^{1/2} \\
\end{pmatrix}.
\end{eqnarray*}
\end{small}

The Jacobian matrix of the map $f$ is provided as $U({\mathbf x}) \Phi({\mathbf x})$.
Hence, 
for some ${\mathbf v}_0 \in \RR^3$,
\begin{eqnarray*}
f({\mathbf x})
&=& 
\frac{ c^{1/3} \sqrt{d_1^2/4 + d_0 d_2^2} }{d_2} \left((d^3 + 3 d_2 x_3)^{1/3} U_0
\begin{pmatrix}
-\epsilon_{x_1} /2 \\
d_2 \epsilon^{1/2} \sin \sqrt{d_1^2/4 + d_0 d_2^2} x_2 \\
d_2 \epsilon^{1/2} \cos \sqrt{d_1^2/4 + d_0 d_2^2} x_2 \\
\end{pmatrix} + {\mathbf v}_0 \right) \\
&\propto& (d^3 + 3 d_2 x_3)^{1/3} U_0
\begin{pmatrix}
- d_1/2 + d_2^2 x_1 \\
\sqrt{d_1^2/4 + d_0 d_2^2 - (d_2^2 x_1 - d_1/2)^2 } \sin \sqrt{d_1^2/4 + d_0 d_2^2} x_2 \\
\sqrt{d_1^2/4 + d_0 d_2^2 - (d_2^2 x_1 - d_1/2)^2 } \cos \sqrt{d_1^2/4 + d_0 d_2^2} x_2 \\
\end{pmatrix} + {\mathbf v}_0.
\end{eqnarray*}

For constant $r > 0$, 
by putting $s := \frac{1}{\sqrt{d_1^2/4 + d_0 d_2^2}}$, $U_0 = I$ and ${\mathbf v}_0 = 0$, and replacing $x_1$, $x_2$, $x_3$ by $(x_2/r s + d_1/2)/d_2^2$, $2 \pi x_1$, $(x_3 - d^3)/3 d_2$ respectively,
\begin{eqnarray*}
f({\mathbf x})
&\propto& x_3^{1/3}
\begin{pmatrix}
x_2 \\
\sqrt{ r^2 - x_2^2 } \sin (2 \pi x_1/s) \\
\sqrt{ r^2 - x_2^2 } \cos (2 \pi x_1/s) \\
\end{pmatrix}.
\end{eqnarray*}

The above $f$ is injective on the ${\mathfrak D}$, and maps ${\mathfrak D}$ onto a ball of radius $r R$: 
$$
{\mathfrak D} := \{ (x_1, x_2, x_3) \in \RR^3: 0 \le x_1 < s,\ -r < x_2 < r,\ 0 < x_3 < R \}.
$$

$B_3$ of Theorem~\ref{thm: theorem 1} can be used as the lattice basis. 
Figures~\ref{fig: 3D Vogel spiral surface}~and~\ref{fig: 3D Vogel spiral} present the surface pattern and the cross-sections of the 3D Vogel spiral for the parameters $s = 1$ and $r = R = 1000$.
\begin{figure}[htbp]
\scalebox{0.5}{\includegraphics{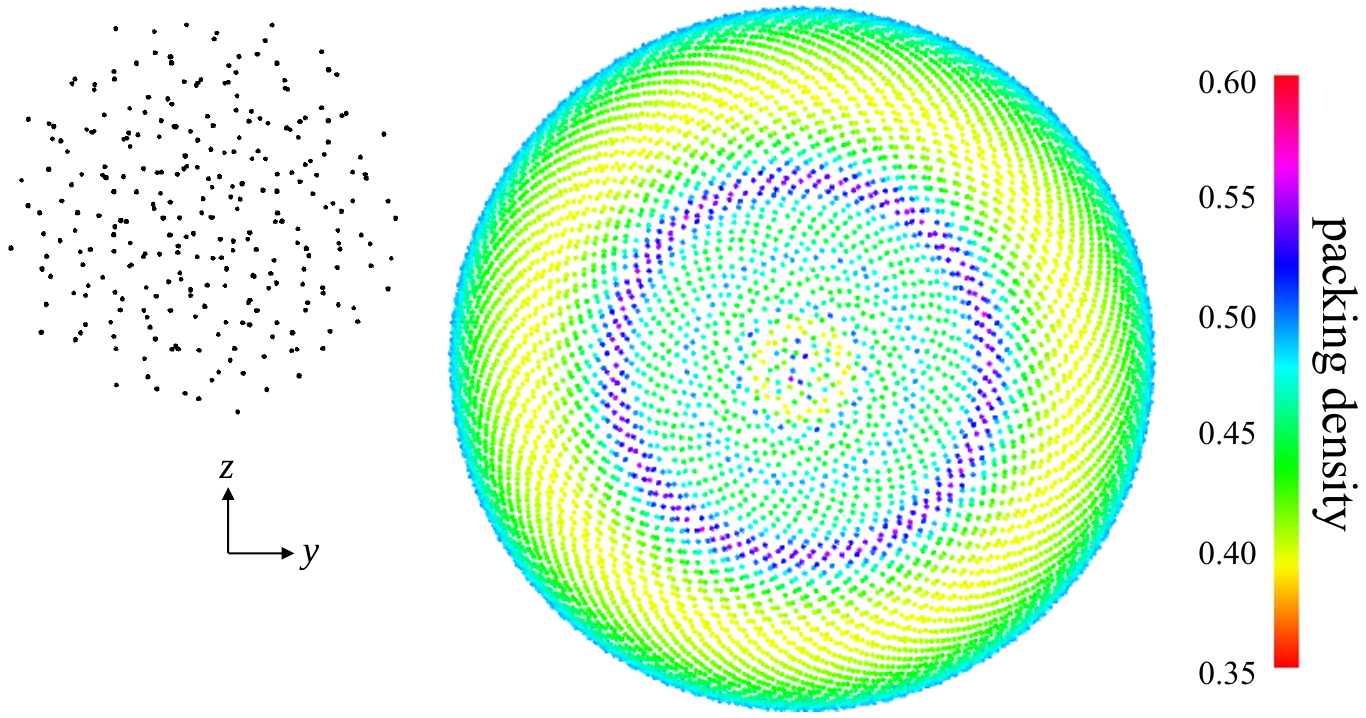}}
\caption{Point distribution around the origin (left) and the pattern on the semisphere (right) of the 3D Vogel spiral for the parameters $s = 1$, $r = R = 1000$. }
\label{fig: 3D Vogel spiral surface} 
\end{figure}
\begin{figure}[htbp]
\scalebox{0.55}{\includegraphics{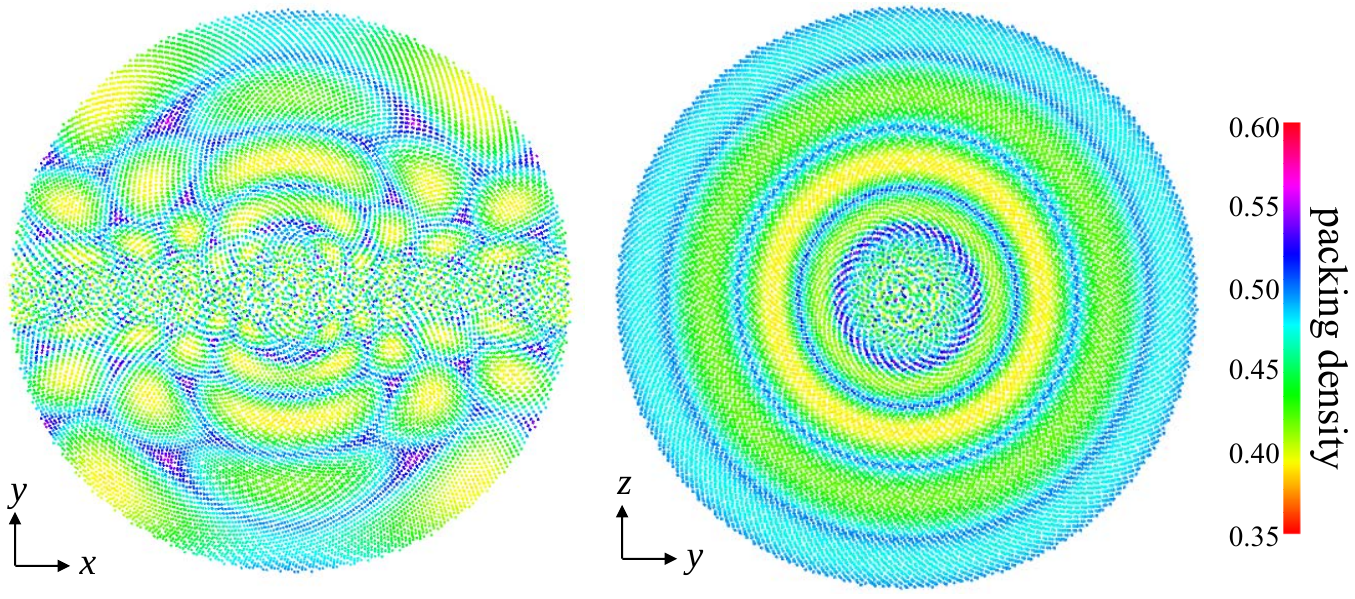}}
\caption{Cross-sections of the 3D Vogel spiral in the $x$-$y$ and $y$-$z$ planes in case of $s = 1$ and $r = R = 1000$.}
\label{fig: 3D Vogel spiral} 
\end{figure}
\end{example}

\section{General case: packing of Riemannian manifolds}
\label{sec: Application to aperiodic packings of the Riemannian manifolds}

The determination of the Riemannian manifolds that can be packed by the proposed method is a problem raised for the first time in this study.
Theorem~\ref{thm: theorem 3} deals with this problem in a local sense in the case of real analytic surfaces (class $C^\omega$). 
The general treatment of the same problem in a global sense is left as a challenging problem for future research.

Let $N \ge n > 0$ be integers, and $f({\mathbf x}) = (f_1({\mathbf x}), \dots, f_n({\mathbf x})) : {\mathfrak D} \rightarrow \RR^N$ be a function defined on an open subset ${\mathfrak D} \subset \RR^{n}$ that fulfills ($\star$), ($\star\star$) in the Introduction section. Thus,
\begin{eqnarray}\label{eq: general form of Jacobian matrix}
D f ({\mathbf x}) = 
U({\mathbf x}) \begin{pmatrix} \Phi({\mathbf x}) \\ O \end{pmatrix}, \quad O: \text{$(N-n) \times n$ zero matrix.}
\end{eqnarray}

As in the previous section, if $n$ and $N$ are fixed, it is possible to derive the PDE systems for the  diagonal entries of $\Phi({\mathbf x})$ and the orthogonal matrix $U({\mathbf x})$.


The Nash embedding theorem states that there is a lower bound $N_0$ only depending on $k$ such 
that any $C^k$ Riemannian $n$-manifolds $(M, g)$ can be isometrically embedded into the Euclidean space $\RR^N$ if $N \ge n_0$,
by an injective map of class $C^k$ ($3 \le k \le \infty$ or $k = \omega$) \cite{Nash1956}, \cite{Nash1966}.

Therefore we can fix an atlas
$\{ (U_\alpha, \varphi_\alpha) \}_{\alpha \in I }$ of $M$ 
and isometries $\iota_\alpha : U_{\alpha } \rightarrow \iota_{\alpha }(U_{\alpha })\subset \RR^{N}$.
Let $D f_\alpha({\mathbf x})$ be the Jacobian matrix of $f_\alpha := \iota_{\alpha} \circ \varphi_{\alpha}^{-1}$.
For another diffeomorphism $\psi_\alpha : U_{\alpha } \to \psi_{\alpha}(U_{\alpha}) \subset \RR^n$ and any open subset $V \subset U_\alpha$,
it is straightforward to see that 
the Jacobian matrix of $\iota_{\alpha} \circ \psi_{\alpha}^{-1}$ fulfills ($\star$) and ($\star\star$) on $V$ if and only if the following ($\flat$) holds:
\begin{itemize}
\item[($\flat$)] The diffeomorphism ${\mathbf x} = \sigma({\mathbf y}) := \varphi_{\alpha} \circ \psi_{\alpha}^{-1}$: $\psi_\alpha(V) \overset{\simeq}{\rightarrow} \varphi_\alpha(V)$ has the Jacobian matrix $D \sigma$ such that 
$\tr{(D \sigma({\mathbf y}))} \tr{(D f_\alpha(\sigma({\mathbf y})))} D f_\alpha(\sigma({\mathbf y})) D \sigma({\mathbf y})$ is diagonal, and 
has a constant determinant.
\end{itemize}

In particular, if 
$\varphi_\alpha$ is an isothermal coordinate system,
\IE 
$\tr{(D f_\alpha({\mathbf x}))} D f_\alpha({\mathbf x}) = \lambda({\mathbf x}) I$ for some positive-valued function $\lambda({\mathbf x})$, 
then ($\flat$) occurs if and only if 
$\tr{(D \sigma({\mathbf y}))} D \sigma({\mathbf y})$ is diagonal,
and
$$\det (\tr{(D \sigma({\mathbf y}))} D \sigma({\mathbf y}) ) = c^{2} \lambda^{-n}( \sigma({\mathbf y})).$$

In Theorem~\ref{thm: theorem 3}, we assume that $n = 2$, and the Riemannian surface $(M, g)$ is real analytic
in order to use the Cauchy-Kovalevskaya theorem.
\begin{theorem}\label{thm: theorem 3}
For any constant $c \ne 0$ and real analytic Riemannian surface $(M, g)$, 
there is an atlas $\{ (U_\alpha, \varphi_\alpha) \}_{\alpha \in I}$ of $M$
such that for any $\alpha \in I$,
we have a real analytic function $\epsilon(x_1, x_2)$ on $\varphi_\alpha(U_\alpha)$ with 
$$
g |_{U_\alpha} = \frac{c^2}{\epsilon(x_1, x_2)} dx_1^2 + \epsilon(x_1, x_2) dx_2^2.
$$
Thus, for any isometry $\iota_\alpha : U_\alpha \rightarrow \iota_\alpha(U_\alpha) \subset \RR_N$,
$\iota_\alpha \circ \varphi_\alpha^{-1}$ fulfills ($\star$) and ($\star\star$).

\end{theorem}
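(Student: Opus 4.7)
The plan is to work locally around each point $p \in M$; once the desired chart is constructed at every point, the union gives the required atlas. Without loss of generality assume $c>0$ (for $c<0$, swap the roles of $x_1$ and $x_2$).

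First, I would reduce to the isothermal case. Since $g$ is real analytic, the classical Korn–Lichtenstein theorem for analytic metrics (which follows, e.g., by solving the Beltrami equation via Cauchy–Kovalevskaya) gives a real analytic chart $\psi\colon U\to\psi(U)\subset\RR^2$ with coordinates $(u,v)$, in which $(\psi^{-1})^*g = \lambda(u,v)(du^2+dv^2)$ for some real analytic $\lambda>0$. By the remark preceding the theorem, it then suffices to exhibit a real analytic local diffeomorphism $\sigma=(u,v)\colon V\to\psi(U)$, where $V\subset\RR^2$ carries coordinates $(x_1,x_2)$, such that $\sigma^*[\lambda(du^2+dv^2)] = (c^2/\epsilon)\,dx_1^2 + \epsilon\,dx_2^2$ for some positive real analytic $\epsilon$.

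Second, I would translate this condition into a first-order PDE system. Expanding the pullback and comparing coefficients yields
\begin{eqnarray}
u_{x_1}u_{x_2} + v_{x_1}v_{x_2} &=& 0, \label{eq:crossA}\\
\lambda(u,v)\,(u_{x_1}v_{x_2} - u_{x_2}v_{x_1}) &=& c, \label{eq:areaB}
\end{eqnarray}
together with the definition $\epsilon := \lambda(u,v)(u_{x_2}^2+v_{x_2}^2)$. The coefficient of $dx_1^2$ then automatically equals $c^2/\epsilon$ thanks to Lagrange's identity $(u_{x_1}^2+v_{x_1}^2)(u_{x_2}^2+v_{x_2}^2) = (u_{x_1}u_{x_2}+v_{x_1}v_{x_2})^2 + (u_{x_1}v_{x_2}-u_{x_2}v_{x_1})^2$ combined with (\ref{eq:crossA}) and (\ref{eq:areaB}). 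So (\ref{eq:crossA})–(\ref{eq:areaB}) are equivalent to the desired metric form.

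Third, I would put the system in Cauchy–Kovalevskaya (Kowalevski) normal form with $x_1$ as the evolution variable. Solving (\ref{eq:crossA})–(\ref{eq:areaB}) algebraically for $u_{x_1}, v_{x_1}$ gives
\begin{equation*}
u_{x_1} = \frac{c\,v_{x_2}}{\lambda(u,v)(u_{x_2}^2+v_{x_2}^2)}, \qquad v_{x_1} = -\frac{c\,u_{x_2}}{\lambda(u,v)(u_{x_2}^2+v_{x_2}^2)}.
\end{equation*}
Both right-hand sides are real analytic in $(u,v,u_{x_2},v_{x_2})$ on the open set where $\lambda>0$ and $u_{x_2}^2+v_{x_2}^2>0$. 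I prescribe real analytic initial data at $x_1=0$, for instance $u(0,x_2)=u_p$, $v(0,x_2)=v_p+x_2$ where $(u_p,v_p)=\psi(p)$, which yields $u_{x_2}^2+v_{x_2}^2=1$ on the initial line. The Cauchy–Kovalevskaya theorem then produces a unique real analytic solution $(u,v)$ on a neighborhood of $(0,0)$.

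Finally, I would check that $\sigma=(u,v)$ is a local diffeomorphism: by (\ref{eq:areaB}), $\det D\sigma = c/\lambda \ne 0$, so the inverse function theorem yields $\varphi_p := \sigma^{-1}\circ\psi$, giving a chart at $p$ of the stated form. Taking one such chart at every $p\in M$ yields the atlas. The principal technical point, which I regard as the main obstacle, is recognizing that (\ref{eq:crossA})–(\ref{eq:areaB}) actually admits a Kowalevski reduction (this depends on the Lagrange identity, which decouples the cross-term condition from the area condition) and arranging initial data so that the denominator $u_{x_2}^2+v_{x_2}^2$ remains strictly positive in a neighborhood of the initial line; the remainder is a direct application of standard analytic-PDE machinery.
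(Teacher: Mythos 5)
Your proof is correct, and it reaches the same endpoint (isothermal coordinates plus Cauchy--Kovalevskaya) by a genuinely different and arguably cleaner technical route. The paper parametrizes the Jacobian of the change of coordinates $\sigma^{-1}$ as $c^{-1}\lambda\,\mathrm{diag}(\epsilon^{1/2},\pm\epsilon^{-1/2})\,U(\theta)$ with $U$ a rotation, then imposes the symmetry of second derivatives to derive a single quasilinear \emph{second-order} PDE for the angle function $\theta$, applies Cauchy--Kovalevskaya to that, and recovers $\log\epsilon$ and $\sigma^{-1}$ by subsequent integration. You instead work with the forward diffeomorphism $\sigma=(u,v)$ directly: expanding the pullback gives exactly two scalar conditions (orthogonality of the columns of $D\sigma$ in the metric, and a fixed Jacobian determinant $c/\lambda$), and Lagrange's identity shows the remaining coefficient condition is automatic. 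Eliminating $u_{x_1},v_{x_1}$ puts this immediately into \emph{first-order} Cauchy--Kovalevskaya normal form for $(u,v)$ in the evolution variable $x_1$. Choosing initial data $u(0,x_2)=u_p$, $v(0,x_2)=v_p+x_2$ keeps the denominator $u_{x_2}^2+v_{x_2}^2$ away from zero and keeps $(u,v)$ inside the domain of $\lambda$, so analyticity and uniqueness follow, and $\det D\sigma=c/\lambda\neq 0$ certifies that $\sigma$ is a local analytic diffeomorphism. The gain of your approach is that it bypasses the integrability computation and the auxiliary angle variable, solving directly for the coordinate change; the paper's approach, in exchange, produces an explicit representation of $D\sigma^{-1}$ in terms of the geometrically meaningful quantities $\epsilon$ and $\theta$, which is reused in the subsequent discussion (e.g.\ in connecting to Theorem~\ref{thm: theorem 4}).
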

\begin{proof}
For any $p \in M$, fix a neighborhood $p \in U_\alpha \subset M$, a diffeomorphism $\varphi_\alpha : U_\alpha \rightarrow \varphi_\alpha(U_\alpha) \subset \RR^{2}$ and an isometry $\iota_\alpha: U_\alpha \rightarrow\iota_\alpha(U_\alpha) \subset \RR^N$.
We may assume that 
$\varphi_\alpha$ is an isothermal coordinate system of $U_\alpha$.
We shall prove that some neighborhood $V \subset U_\alpha$ of $p$
and a diffeomorphism $\sigma: \sigma^{-1}(\varphi_\alpha(V)) \overset{\simeq}{\rightarrow} \varphi_\alpha(V)$ satisfy ($\flat$)
with respect to $f_\alpha := \iota_\alpha \circ \varphi_\alpha^{-1}$ and $\psi_{\alpha} := \sigma^{-1} \circ \varphi_{\alpha} |_V$.
Hence the chart $(V, \psi_\alpha)$ has the desired property from the above discussion.

$D \sigma^{-1}({\mathbf x}) \tr{(D \sigma^{-1}({\mathbf x}))} = ( \tr{(D \sigma(\sigma^{-1}({\mathbf x})))} D \sigma(\sigma^{-1}({\mathbf x})) )^{-1}$ is diagonal
and has a determinant $c^{-2} \lambda^2({\mathbf x})$ for some positive function $\lambda({\mathbf x})$ on $\varphi_\alpha(V)$
if and only if $D \sigma^{-1}$ 
is represented as follows: 
\begin{eqnarray*}
D \sigma^{-1}(x_1, x_2)
&=& c^{-1} \lambda(x_1, x_2)
\begin{pmatrix}
\epsilon^{1/2}(x_1, x_2) & 0 \\
0 & \pm \epsilon^{-1/2}(x_1, x_2)
\end{pmatrix}
U(x_1, x_2), \\
U(x_1, x_2)
&=& 
\begin{pmatrix}
\cos \theta(x_1, x_2) & \sin \theta(x_1, x_2) \\
- \sin \theta(x_1, x_2) & \cos \theta(x_1, x_2)
\end{pmatrix}.
\end{eqnarray*}

If we put $B_i := (D \sigma^{-1})^{-1} (D \sigma^{-1})_{x_i}$
and $A_i := U^{-1} U_{x_i}$, then
\begin{eqnarray*}
B_i &=& \frac{\lambda_{x_i}}{\lambda} I + \frac{\epsilon_{x_i}}{2 \epsilon} \tr{U} \begin{pmatrix} 1 & 0 \\ 0 & -1 \end{pmatrix} U + A_i \\
&=&
\frac{\lambda_{x_i}}{\lambda} I + \frac{\epsilon_{x_i}}{2 \epsilon} 
\begin{pmatrix}
\cos 2 \theta(x_1, x_2) & \sin 2 \theta(x_1, x_2) \\
\sin 2 \theta(x_1, x_2) & -\cos 2 \theta(x_1, x_2)
\end{pmatrix}
+ \theta_{x_i}
\begin{pmatrix}
0 & 1 \\
-1 & 0
\end{pmatrix}.
\end{eqnarray*}
From $(f_{x_1})_{x_2} = (f_{x_2})_{x_1}$, 
the second column of $(D \sigma^{-1})_{x_1} = (D \sigma^{-1}) B_1$
and the first column of $(D \sigma^{-1})_{x_2} = (D \sigma^{-1}) B_2$
must be equal.
Hence, the second column of $B_1$ and the first column of $B_2$ are also equal, which implies
$$
\frac{\lambda_{x_1}}{\lambda} \begin{pmatrix} 0 \\ 1 \end{pmatrix} + \frac{\epsilon_{x_1}}{2 \epsilon} 
\begin{pmatrix}
\sin 2 \theta(x_1, x_2) \\
-\cos 2 \theta(x_1, x_2)
\end{pmatrix}
+ \theta_{x_1}
\begin{pmatrix}
1 \\
0
\end{pmatrix}
=
\frac{\lambda_{x_2}}{\lambda} \begin{pmatrix} 1 \\ 0 \end{pmatrix} + \frac{\epsilon_{x_2}}{2 \epsilon} 
\begin{pmatrix}
\cos 2 \theta(x_1, x_2) \\
\sin 2\theta(x_1, x_2) 
\end{pmatrix}
+ \theta_{x_2}
\begin{pmatrix}
0 \\
-1
\end{pmatrix}.
$$
Thus,
$$
\begin{pmatrix}
\cos 2 \theta(x_1, x_2) & -\sin 2 \theta(x_1, x_2) \\
\sin 2\theta(x_1, x_2) & \cos 2 \theta(x_1, x_2)
\end{pmatrix}
\begin{pmatrix}
(\log \epsilon)_{x_2} \\
(\log \epsilon)_{x_1}
\end{pmatrix}
= 2
\begin{pmatrix} 
-(\log \lambda)_{x_2} + \theta_{x_1} \\
(\log \lambda)_{x_1} + \theta_{x_2}
\end{pmatrix}.
$$
From $( (\log \epsilon)_{x_2} )_{x_1} = ( (\log \epsilon)_{x_1} )_{x_2}$,
\begin{eqnarray*}
& & \hspace{-5mm} \frac{\partial}{\partial x_1}
\begin{pmatrix}
\cos 2 \theta(x_1, x_2) & \sin 2 \theta(x_1, x_2) \\
\end{pmatrix}
\begin{pmatrix} 
-(\log \lambda)_{x_2} + \theta_{x_1} \\
(\log \lambda)_{x_1} + \theta_{x_2}
\end{pmatrix} \\
&=& \frac{\partial}{\partial x_2}
\begin{pmatrix}
-\sin 2\theta(x_1, x_2) & \cos 2 \theta(x_1, x_2)
\end{pmatrix}
\begin{pmatrix} 
-(\log \lambda)_{x_2} + \theta_{x_1} \\
(\log \lambda)_{x_1} + \theta_{x_2}
\end{pmatrix}.
\end{eqnarray*}
which implies
\begin{eqnarray}\label{eq; PDE for theta}
& & \hspace{-5mm}
\begin{pmatrix}
1 & \tan 2 \theta \\
\end{pmatrix} 
\left(
\begin{pmatrix} 
-(\log \lambda)_{x_1 x_2} + \theta_{x_1 x_1} \\
(\log \lambda)_{x_1 x_1} + \theta_{x_1 x_2}
\end{pmatrix} 
+ 2 \theta_{x_2}
\begin{pmatrix} 
-(\log \lambda)_{x_2} + \theta_{x_1} \\
(\log \lambda)_{x_1} + \theta_{x_2}
\end{pmatrix} 
\right) \\
&=&
\begin{pmatrix}
- \tan 2\theta & 1
\end{pmatrix}
\left( 
\begin{pmatrix} 
-(\log \lambda)_{x_2 x_2} + \theta_{x_1 x_2} \\
(\log \lambda)_{x_1 x_2} + \theta_{x_2 x_2}
\end{pmatrix}
- 2 \theta_{x_1}
\begin{pmatrix} 
-(\log \lambda)_{x_2} + \theta_{x_1} \\
(\log \lambda)_{x_1} + \theta_{x_2}
\end{pmatrix}
\right). \nonumber
\end{eqnarray}

From the Cauchy-Kovalevskaya theorem, Eq.(\ref{eq; PDE for theta}) has a real analytic solution $\theta$ defined on $\varphi_\alpha(V)$ for a simply-connected neighborhood $p \in V \subset U$.
From this $\theta$, $\log \epsilon$ and $\sigma^{-1}(x_1, x_2)$ can be constructed
by solving the corresponding PDEs above.
\qed
\end{proof}

%
%

Next, we prove Theorem~\ref{thm: theorem 4}. 
For any diffeomorphism $\sigma : U \rightarrow V$ between open subsets of two manifolds and metric $g$ on $V$, 
For any $p \in U$ and $u, v \in T_p U$,
the pullback metric $\sigma^* g$ is defined by
$$
(\sigma^* g)_p(u, v) = g_{\sigma(p)}(d \sigma_p(u), d \sigma_p(v)).
$$

\begin{theorem}\label{thm: theorem 4}
For any integers $0 < n < N$, and $2 \le k \le \infty$ or $k = \omega$, let $f({\mathbf x}) \in C^k({\mathfrak D}, \RR^N)$ be a function
on a simply-connected open subset ${\mathfrak D} \subset \RR^n$ with the Jacobian matrix $J_f({\mathbf x})$.
Suppose that $f({\mathbf x})$ satisfies (i) and (ii) for some constant $\alpha \in \RR$ and antisymmetric matrix $A$ of degree $N$: 
\begin{enumerate}[(i)]

\item $\tr{J_f({\mathbf x})} Q({\mathbf x}) J_f({\mathbf x}) = O$ for any ${\mathbf x} \in {\mathfrak D}$, if we put:
\begin{eqnarray*}
f^*({\mathbf x}) &:=& (\alpha I + A) f({\mathbf x}), \\
Q({\mathbf x}) &:=& f^*({\mathbf x}) \tr{f^*({\mathbf x})} A + A f^*({\mathbf x}) \tr{f^*({\mathbf x})} - (f^*({\mathbf x}) \cdot f^*({\mathbf x}) ) A.
\end{eqnarray*}
\item $\det (\tr{J_f}({\mathbf x}) J_f({\mathbf x})) \ne 0$, and $f^*({\mathbf x})$ is linearly independent of $f_{x_1}({\mathbf x}), \ldots, f_{x_n}({\mathbf x})$ over $\RR$ for any ${\mathbf x} \in {\mathfrak D}$.
\end{enumerate}

Let $H_f({\mathbf x})$ be the function on ${\mathfrak D}$ determined by the following equations, except for the constant term:
\begin{eqnarray*}
(H_f)_{x_j}({\mathbf x}) &=& \frac{ f^*({\mathbf x}) \cdot f_{x_j}({\mathbf x}) }{ f^*({\mathbf x}) \cdot f^*({\mathbf x}) } \quad (j = 1, \ldots, n).
\end{eqnarray*}

Let $q_f = (q_{ij})_{1 \le i, j \le n}$ be the positive-definite symmetric matrix.
\begin{eqnarray*}
q_f({\mathbf x}) &:=& e^{ - \frac{2 (n+1)}{n} \alpha H_f({\mathbf x}) } \left( f^*({\mathbf x}) \cdot f^*({\mathbf x}) \right)^{\frac{1}{n}} \tr{J_f}({\mathbf x}) \left( I - \frac{f^*({\mathbf x}) \tr{f^*({\mathbf x})} }{ \tr{f^*({\mathbf x})} f^*({\mathbf x}) } \right) J_f({\mathbf x}).
\end{eqnarray*}
Hence, $g_f = \sum_{i, j=1}^n q_{ij} dx_i dx_j$ is a Riemannian metric on ${\mathfrak D}$.
For any function $h({\mathbf x}, x_{n+1})$ on ${\mathfrak D} \times \RR_{> 0}$,
$F_{f, h}({\mathbf x}, x_{n+1}): {\mathfrak D} \times \RR_{> 0} \rightarrow \RR^N$ is defined by
\begin{eqnarray}\label{eq: f with self similarity 2}
F_{f, h}({\mathbf x}, x_{n+1}) &:=& e^{ \alpha h({\mathbf x}, x_{n+1})} \exp(A h({\mathbf x}, x_{n+1})) f({\mathbf x}).
\end{eqnarray}
The following (a) and (b) are equivalent:
\begin{enumerate}
\item[(a)] $F_{f, h}({\mathbf x}, x_{n+1})$
satisfies ($\star$), ($\star\star$) for some $h({\mathbf x}, x_{n+1}) \in C^k({\mathfrak D} \times \RR_{> 0}, \RR)$. 
\item[(b)] The metric $g_f$ is diagonal and has a constant determinant on ${\mathfrak D}$.
\end{enumerate}

For any $C^k$ diffeomorphism ${\mathbf x} = \sigma({\mathbf y})$ on ${\mathfrak D}$, (a$^\prime$), (b$^\prime$) are also equivalent:
\begin{enumerate}
\item[(a$^\prime$)] $F_{f \circ \sigma, h}({\mathbf y}, x_{n+1})$
fulfills ($\star$), ($\star\star$) for some $h({\mathbf y}, x_{n+1}) \in C^k({\mathfrak D} \times \RR_{> 0}, \RR)$.

\item[(b$^\prime$)] The pull-back $\sigma^* g_f = g_{f \circ \sigma}$ is diagonal and has a constant determinant on ${\mathfrak D}$.
\end{enumerate}

\end{theorem}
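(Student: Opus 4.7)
My plan is to compute the Jacobian $J_{F_{f,h}}$ explicitly and read off what conditions ($\star$) and ($\star\star$) impose on $h$. Since $A$ is antisymmetric, $e^{Ah}$ is orthogonal and commutes with $A$, so differentiating Eq.(\ref{eq: f with self similarity 2}) yields, for $j \le n$ and for $j = n+1$,
$$
(F_{f,h})_{x_j} = e^{\alpha h}\, e^{Ah}\! \left[ h_{x_j} f^*({\mathbf x}) + f_{x_j}({\mathbf x}) \right], \qquad (F_{f,h})_{x_{n+1}} = h_{x_{n+1}}\, e^{\alpha h}\, e^{Ah} f^*({\mathbf x}).
$$
Because $(e^{Ah})^{\top} e^{Ah} = I$, the Gram matrix $G := \tr{J_{F_{f,h}}}\, J_{F_{f,h}}$ has entries $G_{i,n+1} = e^{2\alpha h} h_{x_{n+1}}[h_{x_i}\abs{f^*}^2 + f_{x_i}\cdot f^*]$, $G_{n+1,n+1} = e^{2\alpha h} h_{x_{n+1}}^2 \abs{f^*}^2$, and, for $i,j\le n$, $G_{ij} = e^{2\alpha h}[h_{x_i} h_{x_j}\abs{f^*}^2 + h_{x_i}(f^*\cdot f_{x_j}) + h_{x_j}(f^*\cdot f_{x_i}) + f_{x_i}\cdot f_{x_j}]$.

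Diagonality of $G$ (namely ($\star$)) first forces $G_{i,n+1}=0$. Under the nondegeneracy in (ii) (giving $\abs{f^*}^2 > 0$) and $h_{x_{n+1}} \ne 0$, this is equivalent to $h_{x_j} = -(H_f)_{x_j}$ for every $j \le n$. The existence of such an $h$ is the closedness of the $1$-form $\sum_j (H_f)_{x_j}\, dx_j$ on the simply-connected ${\mathfrak D}$; a direct computation using $f^*_{x_i} = (\alpha I + A) f_{x_i}$ and the scalar-transpose identity $\tr{f_{x_j}} A f_{x_i} = -\tr{f_{x_i}} A f_{x_j}$ shows that this closedness is precisely condition (i), $\tr{J_f}\, Q\, J_f = O$. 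So we may write $h({\mathbf x},x_{n+1}) = -H_f({\mathbf x}) + \tilde h(x_{n+1})$ with $\tilde h' \ne 0$. Substituting $h_{x_i} = -(H_f)_{x_i}$ back into $G_{ij}$ collapses it to a positive scalar times the $(i,j)$-entry of $\tr{J_f}(I - f^* \tr{f^*}/\abs{f^*}^2)\, J_f$, hence a positive scalar times $q_f$, so the $n\times n$ block of $G$ is diagonal if and only if $q_f$ is diagonal on ${\mathfrak D}$.

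For ($\star\star$), once diagonality is in hand, the product formula for $\det G$ and the identity $\det q_f = e^{-2(n+1)\alpha H_f}\abs{f^*}^2 \det[\tr{J_f}(I - f^*\tr{f^*}/\abs{f^*}^2) J_f]$ combine to give
$$
\det G = \det q_f({\mathbf x}) \cdot e^{2(n+1)\alpha \tilde h(x_{n+1})}\,(\tilde h'(x_{n+1}))^{2}.
$$
As the two factors depend on disjoint variables, ($\star\star$) is equivalent to $\det q_f$ being constant in ${\mathbf x}$ together with the ODE $e^{2(n+1)\alpha \tilde h}(\tilde h')^{2} = \mathrm{const}$ for $\tilde h$; the latter is always solvable explicitly ($\tilde h$ linear in $x_{n+1}$ if $\alpha = 0$, logarithmic otherwise). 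This completes (a) $\Leftrightarrow$ (b).

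The equivalence (a$^\prime$) $\Leftrightarrow$ (b$^\prime$) reduces to (a) $\Leftrightarrow$ (b) applied to $f \circ \sigma$, once one checks the covariance $g_{f\circ\sigma} = \sigma^* g_f$. This follows from $(f\circ\sigma)^* = f^* \circ \sigma$, $H_{f\circ\sigma} = H_f \circ \sigma$ (up to an irrelevant additive constant), and $J_{f\circ\sigma}({\mathbf y}) = J_f(\sigma({\mathbf y}))\, D\sigma({\mathbf y})$, which together yield $q_{f\circ\sigma}({\mathbf y}) = \tr{D\sigma({\mathbf y})}\, q_f(\sigma({\mathbf y}))\, D\sigma({\mathbf y})$. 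The main obstacle is the one identified in the second paragraph: recognising the tensorial content of condition (i) as exactly the closedness obstruction for the $1$-form defining $H_f$; once this is in place the rest is a mechanical unravelling.
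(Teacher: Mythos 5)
Your proposal is correct and follows essentially the same route as the paper's proof: compute the Jacobian of $F_{f,h}$ using orthogonality and $A$-commutativity of $e^{Ah}$, extract $h_{x_j}=-(H_f)_{x_j}$ from the vanishing of the off-diagonal Gram entries $G_{i,n+1}$, recognize condition (i) as the closedness obstruction for the $1$-form $dH_f$ on the simply-connected domain, reduce the remaining $n\times n$ block to a scalar multiple of $q_f$, separate the determinant into an $\mathbf{x}$-factor and an $x_{n+1}$-factor for ($\star\star$), and handle (a$'$)$\Leftrightarrow$(b$'$) via the covariance $q_{f\circ\sigma} = \tr{D\sigma}\,(q_f\circ\sigma)\,D\sigma$. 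The only minor point worth making explicit is that $h_{x_{n+1}}\neq 0$, which you invoke when deducing $G_{i,n+1}=0\Rightarrow h_{x_j}=-(H_f)_{x_j}$, itself follows from ($\star\star$) (a zero would force $\det\Phi=0$); the paper notes this before the deduction, but it does not change the argument.
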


\begin{remark}
In (b$^\prime$), such a $\sigma$ exists whenever $n = 1$, or $n = 2$ and $k = \omega$ as a result of Theorem~\ref{thm: theorem 3}.
In (i), $Q({\mathbf x}) = O$ holds whenever $A = O$, or $n = 1$, or $N = 2, 3$ and $\alpha = 0$. 
\end{remark}

\begin{proof}
From (i),
$$
\left( \left( \frac{ f^* \cdot f_{x_j} }{ f^* \cdot f^* } \right)_{x_i} - \left( \frac{ f^* \cdot f_{x_i} }{ f^* \cdot f^* } \right)_{x_j} \right)_{ i, j = 1, \ldots, n}
= \frac{2 \tr{J_f} \left(
f^* \tr{f^*} A + A f^* \tr{f^*} - (f^* \cdot f^*) A
\right) J_f }{ (f^* \cdot f^*)^2 } = O.
$$
The existence of $H_{f}$ is obtained from this and the generalized Stokes theorem for 1-forms of class $C^1$ (Theorem~6.1, XXIII, \cite{Lang93}).
Since $I - J_{f} (\tr{J}_{f} J_{f})^{-1} \tr{J_{f}}$ is the projection onto the linear space generated by $f_{x_1}, \ldots, f_{x_n}$,
the assumption (ii) implies: 
\begin{eqnarray*}
\det q_f({\mathbf x}) 
= e^{ - 2 (n+1) \alpha H_f({\mathbf x}) } ( \tr{ f^*({\mathbf x}) } \left( I - J_f (\tr{J_f} J_f)^{-1} \tr{J_f} \right) f^*({\mathbf x}) ) \det (\tr{J_f}({\mathbf x}) J_f({\mathbf x})) \ne 0.
\end{eqnarray*}
Thus, $q_f({\mathbf x})$ is positive-definite, and $g_f$ is a Riemannian metric on ${\mathfrak D}$.

For any diffeomorphism $\sigma$ on ${\mathfrak D}$,
(i) and (ii) hold for $f$ if and only if they do for $f \circ \sigma$. Thus, 
(a$^\prime$) $\Leftrightarrow$ (b$^\prime$) is immediately obtained from (a) $\Leftrightarrow$ (b).
Hence, only (a) $\Leftrightarrow$ (b) is proved in the following.

To show (a) $\Rightarrow$ (b), we assume that $F_{f, h}({\mathbf x}, x_{n+1})$ satisfies ($\star$) and ($\star\star$). 
\begin{eqnarray*}
(F_{f, h})_{x_j}({\mathbf x}, x_{n+1}) & = & e^{\alpha h} \exp(A h) ( h_{x_j} f^*({\mathbf x}) + f_{x_j}({\mathbf x}) ), \quad (j=1, \ldots, n) \\
(F_{f, h})_{x_{n+1}}({\mathbf x}, x_{n+1}) & = & e^{\alpha h} \exp(A h) h_{x_{n+1}} f^*({\mathbf x}).
\end{eqnarray*}

Because of ($\star\star$),
$h_{x_{n+1}}({\mathbf x}, x_{n+1}) \ne 0$ must hold for any $({\mathbf x}, x_{n+1}) \in {\mathfrak D} \times \RR_{> 0}$.
In addition, 
$(F_{f, h})_{x_j} \cdot (F_{f, h})_{x_{n+1}} = 0$ ($j = 1, \ldots, n$) from ($\star$), and $f^*({\mathbf x}) \cdot f^*({\mathbf x}) \ne 0$ from (ii).
These imply:
\begin{eqnarray*}
h_{x_j}({\mathbf x}, x_{n+1}) 
= - \frac{ f^*({\mathbf x}) \cdot f_{x_j}({\mathbf x}) }{ f^*({\mathbf x}) \cdot f^*({\mathbf x}) }.
\end{eqnarray*}

Therefore, $h({\mathbf x}, x_{n+1})$ fulfills for some function $h_0(x_{n+1})$:
\begin{eqnarray}\label{eq: 1st eq for h}
h({\mathbf x}, x_{n+1}) = -H_f({\mathbf x}) + h_0(x_{n+1}).
\end{eqnarray}

We also have: 
\begin{eqnarray*}
(F_{f, h})_{x_j}({\mathbf x}, x_{n+1}) & = & e^{\alpha h} \exp(A h) \left( I - \frac{f^*({\mathbf x}) \tr{f^*({\mathbf x})} }{ \tr{f^*({\mathbf x})} f^*({\mathbf x}) } \right) f_{x_j}({\mathbf x}) \quad (j=1, \ldots, n).
\end{eqnarray*}

Thus, using the Jacobian matrix $J_{f}({\mathbf x})$ of $f({\mathbf x})$,
\begin{eqnarray}\label{eq: n times n part}
((F_{f, h})_{x_i} \cdot (F_{f, h})_{x_j})_{1 \le i, j \le n} &=& e^{2 \alpha h} \tr{J_f}({\mathbf x}) \left( I - \frac{f^*({\mathbf x}) \tr{f^*({\mathbf x})} }{ \tr{f^*({\mathbf x})} f^*({\mathbf x}) } \right) J_f({\mathbf x}) \\
&=& e^{2 \alpha (h + \frac{n+1}{n} H_f) } \left( f^*({\mathbf x}) \cdot f^*({\mathbf x}) \right)^{ - \frac{ 1 }{ n } }
q_f({\mathbf x}). \nonumber
\end{eqnarray}

$F({\mathbf x}, x_{n+1})$ fulfills ($\star\star$)
if and only if the following holds for some constant $c\ne 0$:
\begin{eqnarray*}\label{eq: starstar for F}
\prod_{j=1}^{n+1} ((F_{f, h})_{x_j} \cdot (F_{f, h})_{x_j}) 
&=& h_{ x_{n+1} }^2 e^{2 (n+1)\alpha (h + H_f) } \det q_f({\mathbf x}) \\
&=& ( h_0^\prime(x_{n+1}) )^2 e^{2 (n+1)\alpha h_0(x_{n+1}) } \det q_f({\mathbf x})
= c^2,
\end{eqnarray*}
which implies:
%
$\det q_f({\mathbf x}) = \gamma^2$ for some $\gamma \ne 0$.
Thus, (a) $\Rightarrow$ (b) is proved.

Conversely, the above discussion shows that (b) $\Rightarrow$ (a) is obtained if 
there is a constant $d_1 \ne 0$ such that
$h_0^\prime(x_{n+1}) e^{(n+1)\alpha h_0(x_{n+1}) } = d_1$.
Namely,
\begin{eqnarray}\label{eq: final eq for h}
h({\mathbf x}, x_{n+1})
&=&
\begin{cases}
-H_f({\mathbf x}) + \displaystyle\frac{1}{(n+1) \alpha} \log (d_1 x_{n+1} + d_2)
& \text{if } \alpha \ne 0, \\
-H_f({\mathbf x}) + d_1 x_{n+1} + d_2
& \text{if } \alpha = 0.
\end{cases}
\end{eqnarray}
In particular, if $d_1 = 1$ and $d_2 = 0$, the above $h({\mathbf x}, x_{n+1})$ is defined on ${\mathfrak D} \times \RR_{>0}$, which proves (b) $\Rightarrow$ (a). \qed
\end{proof}

As seen from Eq.(\ref{eq: n times n part}), 
if $N = n + 1$ and a diffeomorphism $\sigma$ on ${\mathfrak D}$ satisfies (b$^\prime$) of Theorem~\ref{thm: theorem 4} for some $3 \le k \le \infty$ or $k = \omega$,
the following $\phi_{j}^2({\mathbf x}, x_{n+1})$ ($j=1, \ldots, n$)
and 
$\phi_{n+1}^2({\mathbf x}, x_{n+1}) = c^2/ \prod_{j=1}^n \phi_{j}^2({\mathbf x}, x_{n+1})$
are a solution of the PDEs of Theorem~\ref{prop: PDE system}.
\begin{eqnarray*}
\phi_{j}^2({\mathbf x}, x_{n+1}) &=& e^{2 \alpha h({\mathbf x}, x_{n+1})} \left( (f (\sigma({\mathbf x})) )_{x_j} \cdot (f ( \sigma({\mathbf x})) )_{x_j} - \frac{ \left( (f (\sigma({\mathbf x})) )_{x_j} \cdot f^*(\sigma({\mathbf x})) \right)^2 }{ f^*(\sigma({\mathbf x})) \cdot f^*(\sigma({\mathbf x})) } \right), \\
\end{eqnarray*}
where $h({\mathbf x}, x_{n+1})$ is the function obtained by replacing $H_f$ in Eq.(\ref{eq: final eq for h}) with $H_{f \circ \sigma}$. 

The 3D Vogel spiral can be obtained from Theorem~\ref{thm: theorem 4} and the following parameters as the case of $n = 2$, $N= 3$: 
\begin{eqnarray*}
\alpha = 0, \quad
A =
\begin{pmatrix}
0 & 0 & 0 \\
0 & 0 & 1 \\
0 &-1 & 0
\end{pmatrix}, \quad 
f(s, t) = s^{1/3}
\begin{pmatrix}
t \\
0 \\
\sqrt{r^2 - t^2}
\end{pmatrix}.
\end{eqnarray*}

\section*{Acknowledgments}
  The authors would like to thank Mr.~Y.~Azama and Ms.~C.~Ooisi of Kyushu University for their help in coding.
  This project was financially supported by the SENTAN-Q program of MEXT Initiative for Realizing Diversity in the Research Environment, JST-Mirai Program (JPMJMI18GD) and JSPS KAKENHI (19K03628).
  The first author participated as an RA of the program, and was also employed by the project of JST CREST (JPMJCR1911). 
  The research by the second author was supported by grants from the Research Grants Council of the Hong Kong SAR, China (HKU 17301317 and 17303618).

\section*{Author contributions}
The last author designed the project, and performed the majority of the mathematical research.
The first author derived the PDEs and coded programs with the last author.
The second author supervised the project as the mentor of the SENTAN-Q program.

\bibliography{refs}{}   
\bibliographystyle{plain}      

\end{document}